\providecommand{\noopsort}[1]{} 
\theoremstyle{plain}
\newtheorem{theorem}{Theorem}[section]          \newcommand{\thm}[1]{Theorem \ref{#1}}
\newtheorem{corollary}[theorem]{Corollary}     \newtheorem*{nonumbercorollary}{Corollary}
\newtheorem{lemma}[theorem]{Lemma}              
\newtheorem{proposition}[theorem]{Proposition} 
\newtheorem*{nonumberconjecture}{Conjecture}
\newtheorem{claim}[theorem]{Claim}             
\newtheorem{main}{Theorem}
\theoremstyle{definition}
\newtheorem{definition}[theorem]{Definition}
\newtheorem{example}[theorem]{Example}
\numberwithin{table}{section}
\newcommand{\al}{\alpha}\newcommand{\be}{\beta}
\newcommand{\Z}{\mathbb{Z}}\newcommand{\Q}{\mathbb{Q}}
\newcommand{\s}{\mathbb{S}}
\newcommand{\SO}{\mathrm{SO}}
\newcommand{\Or}{\mathrm{O}}
\DeclareMathOperator{\sone}{S^1}
\DeclareMathOperator{\cod}{cod}
\DeclareMathOperator{\diag}{diag}
\DeclareMathOperator{\ave}{ave}
\newcommand{\of}[1]{\left(#1\right)}
\newcommand{\ceil}[1]{\left\lceil #1 \right\rceil}
\newcommand{\floor}[1]{\left\lfloor #1 \right\rfloor}
\newcommand{\pfrac}[2]{\left(\frac{#1}{#2}\right)}
\newcommand{\inner}[1]{\left\langle #1 \right\rangle}
\newcommand{\st}{~|~}
\newcommand{\id}{\mathrm{id}}
\author{Lee Kennard}
\title[Fundamental groups of manifolds with positive curvature and symmetry]{Fundamental groups of manifolds with positive sectional curvature and torus symmetry}
\address{Department of Mathematics, University of Oklahoma, Norman, Oklahoma 73019}
\email{kennard@math.ou.edu}
\urladdr{www.math.ou.edu/~kennard}
\thanks{2000 Mathematics Subject Classification: 53C20 (Primary) 55M35, 55S20 (Secondary). Keywords: Fundamental groups, positive sectional curvature, isometric torus actions, Chern problem, secondary cohomology operations}
\begin{document}
\begin{abstract}
In 1965, S.-S. Chern posed a question concerning the extent to which fundamental groups of manifolds admitting positive sectional curvature look like spherical space form groups. The original question was answered in the negative by Shankar in 1998, but there are a number of positive results in the presence of symmetry. These classifications fall into categories according to the strength of their conclusions. We give an overview of these results in the case of torus symmetry and prove new results in each of these categories.
\end{abstract}

\maketitle

\vspace{-.2in}

\section{Introduction}\smallskip

The sphere is the simplest example of a compact manifold. With its standard Riemannian metric, it has positive sectional curvature. Strikingly, in all odd dimensions except for $7$ and $13$, the spheres remain the only simply connected, compact manifolds known to admit positively curved metrics. In view of this, it is not surprising that the realization problem for fundamental groups of positively curved manifolds remains open. As far as general obstructions go, one only has the classical results of Synge and Bonnet--Myers, as well as Gromov's universal bound on the number of generators (see \cite{Gromov78}).

Spherical space forms provide many examples of fundamental groups of positively curved manifolds (see Davis--Milgram \cite{DavisMilgram85}, Hambleton \cite{Hambleton15}, and Wolf \cite{Wolf}). However they form a very restrictive class of groups. In particular, they satisfy the properties that every abelian subgroup is cyclic and every involution is central. By analogy with Preissmann's theorem, S.-S. Chern asked whether the first of these properties holds for the fundamental groups of positively curved manifolds (see \cite[p.~167]{ChernProblem65}). The conjecture that the answer is yes was at times called the Chern conjecture (see Yau \cite[p. 671]{Yau82}, Petersen \cite{Petersen96}, and Berger \cite[p. 583]{Berger03}). 

Shankar \cite{Shankar98} answered Chern's question in the negative by showing that $\pi_1(M)$ can contain $\Z_2 \times \Z_2$, as well as dihedral groups of all orders. Further study and examples by Bazaikin \cite{Bazaikin99}, Grove--Shankar \cite{GroveShankar00}, and Grove--Shankar--Ziller \cite{GroveShankarZiller06} show that, in particular, $\pi_1(M)$ can contain $\Z_3\times\Z_3$. It remains unknown whether $\pi_1(M)$ can contain $\Z_p \times \Z_p$ for some $p \geq 5$.

These examples grew out of a larger program formulated by Grove whereby one restricts attention to positively curved manifolds with a large degree of symmetry. For example, Shankar's examples involve free actions on homogeneous spaces with positive curvature. For surveys on the interplay between positive curvature and symmetry in general, see Grove \cite{Grove02,Grove09}, Wilking \cite{Wilking07}, and Ziller \cite{Ziller07,Ziller14}.

On the other hand, restricting to the case of positively curved manifolds with symmetry leads to a number of fundamental group obstructions. It is the purpose of this article to examine such obstructions in the specific case of torus symmetry. 

\smallskip

The strongest possible obstructions for fundamental groups of positively curved manifolds imply that the groups are cyclic. These results are sharp in a sense since lens spaces $\s^{2m-1}/\Z_k$ have cyclic fundamental groups of arbitrary order and admit constant positive curvature metrics with $T^m$ symmetry. A number of papers show that this conclusion to hold for arbitrary positively curved manifolds under weaker symmetry assumptions. Two such results are highlighted here.

\begin{theorem}[Wilking, Frank--Rong--Wang]\label{thm:WFRW}
Let $M^n$ be a closed Riemannian manifold with positive sectional curvature and $T^r$ symmetry. The fundamental group is cyclic under each of the following conditions:
	\begin{itemize}
	\item $r \geq \frac{n+1}{4} + 1$.
	\item $r \geq \frac{n+1}{6} + 1$ and $n + 1 \not\equiv 0 \bmod{4}$.
	\end{itemize}
\end{theorem}

The first statement is proved in Wilking \cite[Theorem 4]{Wilking03} (cf. Rong \cite{Rong02}), where it is also shown to be sharp if $n + 1 \equiv 0 \bmod{4}$. The second statement is proved in Frank--Rong--Wang \cite[Theorem A]{FrankRongWang13}, and it is shown there to be sharp if $n + 1 \equiv 0 \bmod{6}$. Further improvements exist in some sufficiently large dimensions satisfying the properties that $n+1 \not\equiv 0 \bmod{4}$ and $n+1 \not\equiv 0 \bmod{6}$ (see Rong--Wang \cite{RongWang08} and Wang \cite[Theorem B]{Wang07}), but these results are not known to be sharp.

In Example \ref{exa:SCC}, we construct a family of examples of space forms with large torus actions and non-cyclic fundamental groups. When $n + 1 \equiv 0 \bmod{4}$ or $n + 1 \equiv 0 \bmod{6}$, these examples are those of Wilking and Frank--Rong--Wang. In general, when $q$ is the smallest prime such that $n + 1 \equiv 0 \bmod{2q}$, there exists a non-cyclic group $\Gamma$ and a space form $\s^n/\Gamma$ that admits $T^r$ symmetry with $r = \frac{n+1}{2q}$. Therefore, the best possible extension of Theorem \ref{thm:WFRW} would be the following:

\begin{nonumberconjecture}
Let $q$ be a prime. Assume $n$ is odd such that $q$ is the smallest prime dividing $\frac{n+1}{2}$. If $M^n$ be a closed Riemannian manifold with positive sectional curvature and $T^r$ symmetry such that $r \geq \frac{n+1}{2q} + 1$, then $\pi_1(M)$ is cyclic.
\end{nonumberconjecture}

When $q = 2$ or $q = 3$, the conjecture holds by Theorem \ref{thm:WFRW}.  An equivalent version of this conjecture is stated in Wang \cite{Wang07}, and it is proved in the case of constant sectional curvature (see \cite[Theorem A]{Wang07}). Our first main theorem generalizes Wang's result by proving the conjecture in the case where the universal cover is a homotopy sphere. It also proves the conjecture for all $q$ in all but finitely many dimensions, under the additional assumption that the torus action has no fixed point.

\begin{main}\label{thm:SCC}
The conjecture above holds in each of the following cases:
	\begin{enumerate}
	\item\label{partSCCintegral} The universal cover of $M$ is homeomorphic to a sphere.
	\item\label{partSCCempty} The dimension of $M$ is at least $16q^2$, and the torus action has no fixed point.
	\end{enumerate}
\end{main}

Moving beyond cyclic fundamental groups, the next important class of groups are those that act freely and linearly on $\s^3$. These are called the three-dimensional spherical space form groups, and they are classified (see Wolf \cite[Section 7.4]{Wolf}). There are again a number of results that provide conditions under which a positively curved manifold with torus symmetry has fundamental group isomorphic to a three-dimensional spherical space form group. Here are two (see \cite[Theorem A]{FrankRongWang13} and \cite[Theorem 1.1]{RongWang08}).

\begin{theorem}[Frank--Rong--Wang, Rong--Wang]\label{thm:FRWRW}
If $M^{n}$ is a closed Riemannian manifold with positive sectional curvature and $T^r$ symmetry, then $\pi_1(M)$ is the fundamental group of a three-dimensional spherical space form under each of the following conditions:
	\begin{itemize}
	\item $r \geq \frac{n+1}{6} + 1$.
	\item $r \geq \frac{n+1}{6}$, $n + 1 \not\equiv 0 \bmod{6}$, and $n \geq 25$.
	\end{itemize}
\end{theorem}

If $r \geq \frac{n+1}{6}$, $n + 1 \equiv 0 \bmod{6}$, and $n \geq 25$, Rong and Wang \cite{RongWang08} prove a partial classification that either $\pi_1(M)$ is a three-dimensional spherical space form group or $\pi_1(M)$ acts freely on a positively curved rational (homology) $5$--sphere. The second main result of this paper sharpens the latter possibility in two ways.

\begin{main}\label{thm:RongWang5mod6}
Let $M^{n}$ be a closed Riemannian manifold with positive sectional curvature and $T^r$ symmetry. If $n \geq 25$ and $r \geq \frac{n+1}{6}$, then either $\pi_1(M)$ is a three-dimensional spherical space form group or $r = \frac{n+1}{6}$ and $\pi_1(M)$ acts freely both by diffeomorphisms on $\s^5$ and by isometries on a positively curved mod $2$ and mod $3$ homology $5$--sphere.
\end{main}

We remark that the action by diffeomorphisms on $\s^5$ might not be linear, so we would like to underline Rong and Wang's question in \cite{RongWang08} about whether $\pi_1(M)$ is a five-dimensional spherical space form group in this case.

\smallskip

Moving beyond spherical space form groups, there is the larger class of finite groups that act freely and smoothly, but not necessarily linearly, on standard spheres. According to the classification of Madsen, Thomas, and Wall, these are precisely the finite groups that satisfy the two following properties:
	\begin{itemize}
	\item ($p^2$ conditions) $\pi_1(M)$ does not contain a copy of $\Z_p \times \Z_p$ for any prime $p$.
	\item ($2p$ conditions) every involution in $\pi_1(M)$ is central.
	\end{itemize}
The first of these is equivalent to the property that every abelian subgroup of $\pi_1(M)$ is cyclic, as in Chern's original question. The second is proved in Milnor \cite{Milnor57} for finite groups that act freely on a manifold that is also a mod $2$ homology sphere (cf. Bredon \cite[Chapter III.8.3]{Bredon72} and Davis \cite[Corollary 7.5]{Davis83}). One might therefore try to prove that these properties carry over to fundamental groups of positively curved manifolds with large symmetry. With these goals in mind, Sun and Wang nearly proved both the $p^2$ and $2p$ conditions under a symmetry assumption weaker than in the results above (see \cite[Theorems A and B]{SunWang09}):

\begin{theorem}[Sun--Wang]\label{thm:SunWang}
If $M^n$ is a closed Riemannian manifold with positive sectional curvature and $T^r$ symmetry such that $n \geq 23$ and $r \geq \frac{n+3}{8} + 1$, then $\pi_1(M)$ does not contain $\Z_p \times \Z_p$ for any prime $p\neq 3$, and every involution in $\pi_1(M)$ is central.
\end{theorem}

As an application of the results in this article, we can remove the condition that $p \neq 3$ in this theorem. In fact, we require less symmetry to do so in the case where $\dim(M) \equiv 1 \bmod{4}$.

\begin{main}\label{thm:CCS12}
Let $M^n$ be a closed Riemannian manifold with positive sectional curvature.
	\begin{enumerate}
	\item If $r \geq \frac{n+3}{8} + 1$ and $n \geq 23$, then $\pi_1(M)$ acts freely and smoothly on a sphere.
	\item If $r \geq \frac{n-1}{12} + 3$ and $n \equiv 1 \bmod{4}$, then $\pi_1(M)$ acts freely and smoothly on $\s^5$ or $\s^9$.
	\end{enumerate}
In either case, the fundamental group satisfies all $p^2$ and $2p$ conditions.
\end{main}

Part (2) of Theorem \ref{thm:CCS12} is actually one in a family of results where the symmetry assumption is $r \geq \frac{n-1}{4q} + q$ for some positive number $q$ and where part of the conclusion is that $\pi_1(M) \supseteq\Z_p \times \Z_p$ for some prime $p$ only if $2 < p < q$ (see Theorem \ref{thm:CCS4q}). Furthermore, this family of results follows from the next theorem, which is the main technical result of this paper.

\begin{main}\label{thm:CCSlog}
Let $M^n$ be a closed, positively curved Riemannian manifold with $n \equiv 1 \bmod{4}$ and $T^r$ symmetry. If $r > \log_{4/3}\pfrac{n+3}{6}$, then $\pi_1(M) \cong \Z_{2^e} \times \Gamma$ for some $e\geq 0$ and some group $\Gamma$ of odd order. Moreover, the following hold:
	\begin{enumerate}
	\item If $\Gamma \supseteq \Z_p \times \Z_p$ for some prime $p$, then $r \leq \frac{n+1}{2p}$ and $r \leq \frac{n+1}{4p} + \frac{p}{2}$.
	\item If $\Gamma \not\supseteq \Z_p \times \Z_p$ for all $p$, then there exists an odd integer $d \leq \frac{n+1}{2r}$ such that $\pi_1(M)$ contains a normal, cyclic subgroup of index $d$ and acts freely and smoothly on $\s^{2d-1}$.
	\end{enumerate}
\end{main}

The factorization of $\pi_1(M)$ into a cyclic $2$--group and an odd-order group immediately implies that every involution is central. This factorization follows from a general surgery-theoretic result of Davis and Weinberger, applied to the special case of a positively curved, rational (homology) sphere of dimension $n \equiv 1 \bmod{4}$. For further discussion of their result and about the structure of $\pi_1(M)$ in the context of this theorem, see Section \ref{sec:DavisWeinbergerBurnside}. 

\smallskip

Finally, continuing along the path of results with weaker symmetry assumptions and correspodingly weaker fundamental group classifications, we come to a major result of Rong. It requires only a circle action (see \cite{Rong99}, cf. \cite[Theorem B]{Rong05}):

\begin{theorem}[Rong]
For each $n$, there exists a constant $w(n)$ such that if $M^n$ is a closed Riemannian manifold with positive sectional curvature and circle symmetry, then $\pi_1(M)$ has a cyclic subgroup of index at most $w(n)$.

In particular, if $\pi_1(M) \supseteq \Z_p \times \Z_p$ for some prime $p$, then $p \leq w(n)$.
\end{theorem}

The formula for $w(n)$ grows super-exponentially in $n$, however examples on space forms suggest that it might be replaced by a linear function $n$. The next result provides evidence for this suggestion in the special case where the universal cover is a rational sphere.

\begin{nonumbercorollary}[Corollary \ref{cor:RongRationalSphere}]
Let $M^n$ be a closed, positively curved Riemannian manifold that admits a nontrivial, isometric circle action. If the universal cover of $M$ is a rational sphere and $\pi_1(M) \supseteq \Z_p\times\Z_p$ for some prime $p$, then $p \leq \frac{n+1}{2}$.
\end{nonumbercorollary}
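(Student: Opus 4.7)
The plan is to reduce the problem to the universal cover and then apply Lemma \ref{lem:SunWang} along with the attendant fixed-point analysis used in the proof of Theorem \ref{thm:CCS}.

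First, Bonnet--Myers ensures that $\pi_1(M)$ is finite, so the universal cover $\widetilde{M}$ is a closed, positively curved manifold, which by hypothesis is a rational homology sphere of dimension $n$. The deck group contains $G := \Z_p \times \Z_p$ acting freely on $\widetilde{M}$. For odd $p$ this forces $\chi(\widetilde{M}) = 0$, hence $n$ is odd; the conclusion is trivial for $p = 2$ once $n \geq 3$, and vacuous for $n = 1$, so I would assume $p$ is odd throughout.

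Second, because $\widetilde{M}$ is simply connected, the isometric $S^1$-action on $M$ lifts to an isometric $S^1$-action on $\widetilde{M}$ that commutes with the deck transformations. This produces an effective, isometric action of $S^1 \times G$ on $\widetilde{M}$, with $G$ still acting freely.

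Third, I would apply Lemma \ref{lem:SunWang} to this combined action. The discussion following that lemma presumably analyzes the fixed sets of cyclic subgroups $\Z_p \subseteq S^1$: since $G$ commutes with such a subgroup, it acts on each fixed-set component $F$, and the action remains free because $G$ acts freely on $\widetilde{M}$. Wilking's connectedness theorem guarantees that, once $\dim F$ is sufficiently large, $F$ inherits the structure of a rational homology sphere, while classical Smith theory forbids a free $\Z_p \times \Z_p$-action on any mod-$p$ cohomology sphere. The tension between ``rational'' and ``mod-$p$'' sphere structure on $F$, combined with the codimension estimate supplied by Lemma \ref{lem:SunWang}, should translate into the claimed bound $2p \leq n+1$.

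The step I expect to be the main obstacle is the sharp dimensional accounting in this third step. A crude fixed-point analysis yields only a linear bound of the form $p \leq C \cdot n$ for some constant $C$; extracting the sharp constant $\tfrac{1}{2}$ requires the precise codimension estimate of Lemma \ref{lem:SunWang} together with careful tracking of which mod-$p$ cohomology classes persist on the fixed-set component $F$ under the free $G$-action.
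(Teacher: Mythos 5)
Your first two steps are exactly the paper's: lift the circle action to the universal cover $\widetilde{M}$, which is a simply connected rational homology sphere on which $\pi_1(M)$ (and hence $\Z_p\times\Z_p$) acts freely and commutes with the lifted $S^1$-action, and your reduction to odd $n$ and odd $p$ is also fine. The gap is entirely in your third step, where you name the right lemma but then speculate about an argument that is not the one needed. You do not need to pass to fixed sets of cyclic subgroups $\Z_p\subseteq S^1$, invoke Wilking's connectedness theorem, or track mod-$p$ cohomology classes on the fixed set; all of that is machinery internal to the \emph{proof} of Lemma \ref{lem:SunWang}, not something you apply on top of it. The lemma's \emph{conclusion} already hands you the estimate: since $\Z_p\times\Z_p$ (odd order) acts freely on $\widetilde{M}$ and commutes with the effective $S^1$-action, part (1) of Lemma \ref{lem:SunWang} gives $2p \mid \cod\bigl(\widetilde{M}^{S^1}\bigr)$. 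One then only needs the trivial observation that $\cod\bigl(\widetilde{M}^{S^1}\bigr) \leq n+1$ --- this is the paper's stated convention when the fixed set is empty, and is at most $n-1$ when it is nonempty (the action is nontrivial, so the fixed set is a proper, odd-dimensional submanifold). Combining, $2p \leq n+1$, which is the claim. Your closing paragraph hedges that ``crude fixed-point analysis yields only $p \leq C\cdot n$'' and that extracting the sharp constant requires care; in fact the sharp constant falls out immediately from the two displayed inequalities, with no further cohomological bookkeeping.
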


This result is an immediate consequence of one of the new obstructions proved in this article (see Proposition \ref{pro:CircleObstruction} and the discussion that follows).

In addition to the standard sphere, there are two simply connected rational $7$--spheres known to admit positive curvature, the Berger space $\SO(5)/\SO(3)$ (see \cite{Berger61}) and a cohomogeneity one manifold $P_2$ homeomorphic but not diffeomorphic to the unit tangent bundle of $\s^4$ (see Dearricott \cite{Dearricott11} and Grove--Verdiani--Ziller \cite{GroveVerdianiZiller11}). In addition, there is an infinite list of rational $7$--spheres that are considered candidates to admit positive curvature (see Grove--Wilking--Ziller \cite{GroveWilkingZiller08} and Verdiani--Ziller \cite{VerdianiZiller14}), and Petersen and Wilhelm claim that the Gromoll--Meyer exotic $7$--sphere admits positive curvature (see \cite{PetersenWilhelm09pre}, cf. Joachim--Wraith \cite[Section 2]{JoachimWraith08} and references therein). With these examples in mind, we state the following strengthening of this corollary in dimension seven.

\begin{nonumbercorollary}[Corollary \ref{cor:dim7}]
Let $M^7$ be a closed, positively curved Riemannian manifold whose universal cover is a rational sphere. If a circle acts effectively by isometries on $M$, then every subgroup of $\pi_1(M)$ of odd order is cyclic.
\end{nonumbercorollary}

\smallskip
The proofs in this article require tools of two types. The first collection of tools provide cohomological information about positively curved manifolds with torus symmetry. Most significantly, we rely on ideas developed in Wilking \cite{Wilking03} and refined by the author and Amann (see \cite{AmannKennard14,AK3,Kennard13,Kennard14}). They include the connectedness lemma, results concerning periodicity in cohomology, and a powerful machinery for inducting over dimension. In this article, these results serve to essentially reduce the proofs of the main theorems to the case where the universal cover of the manifold $M$ is a rational sphere. This is one of the places where the assumption of $\dim(M) \equiv 1 \bmod{4}$ is used.

The second collection of tools for the proofs are three new obstructions to free actions of finite groups on manifolds with special cohomology, curvature, or symmetry properties. The first extends a result of Smith that obstructs free actions by $\Z_p \times \Z_p$ on mod $p$ homology spheres. This involves a technical lemma which refines the periodicity theorem in earlier work \cite{Kennard13} of the author (see Proposition \ref{pro:MorePeriodicity}). The proof of this refinement builds upon the use of Steenrod powers in \cite{Kennard13} and uses in addition secondary cohomology operations. The second obstruction is an immediate application of a surgery-theoretic result of Davis and Weinberger on free, orientation-preserving actions on rational spheres $M$ with $\dim(M) \equiv 1\bmod{4}$ (see Proposition \ref{pro:DavisWeinberger}).  This obstruction essentially reduces all of our proofs to the case of odd-order fundamental groups. For such groups, we are able to apply a classification Burnside, results about finite groups with periodic cohomology, and a realization theorem due to Madsen, Thomas, and Wall. The third obstruction was inspired by, and generalizes, an obstruction used in earlier work of Rong, Sun, and Wang. It provides an obstruction to free, isometric actions by $\Z_p\times\Z_p$ on positively curved rational spheres that commute with an isometric circle action (see Proposition \ref{pro:CircleObstruction}).

\smallskip
The outline of this article is as follows. Section \ref{sec:Preliminaries} states conditions under which a simply connected, positively curved manifold with torus symmetry is a rational sphere. Sections \ref{sec:MorePeriodicity}, \ref{sec:DavisWeinbergerBurnside}, and \ref{sec:CircleObstruction} provide proofs of the three new obstructions mentioned above. Section \ref{sec:DavisWeinbergerBurnside} also collects results about finite groups with periodic cohomology. Section \ref{sec:Room} pulls together these obstructions and applies them to torus actions on positively curved rational spheres. Theorem \ref{thm:RongWang5mod6} is also proved in this section. Theorem \ref{thm:CCSlog} is proved in Section \ref{sec:CCSlog} using the results of Sections \ref{sec:Preliminaries} and \ref{sec:Room}, and then it is used to prove Theorems \ref{thm:CCS12} and \ref{thm:SCC} in Sections \ref{sec:CCS} and \ref{sec:SCC}, respectively.

\vspace{.1in}
   
\noindent\textbf{Acknowledgements:} 
This work began as part of the author's Ph.D. thesis, and he is pleased to thank his advisor, Wolfgang Ziller, for multiple discussions and comments on ealier drafts. The author is also grateful to Daryl Cooper and Darren Long for directing him to the work of Jim Davis and Shmuel Weinberger, and to Jim Davis for a discussion of this work. The author is grateful for support from NSF Grants DMS-1045292 and DMS-1404670.

\tableofcontents

\smallskip
\section{Preliminaries}\label{sec:Preliminaries}
\bigskip

In this section, we mention a few of preliminary results and tools that have been used repeatedly to study fundamental groups of positively curved manifolds with torus symmetry. In addition, we state three important sufficient conditions under which a positively curved manifold with torus symmetry is a rational homology sphere.

The first crucial result is the following (see Berger \cite{Berger66} and Grove--Searle \cite{GroveSearle94}).

\begin{theorem}[Berger]\label{thm:Berger} 
If $T^r$ acts isometrically on a compact, positively curved manifold $M^n$, then there exists a codimension one subtorus $T^{r-1}$ that fixes some point of $M$.
\end{theorem}

This result ensures the existence of many non-trivial fixed-point sets in the presence of isometric torus actions on manifolds with positive sectional curvature. Such fixed-point sets are (embedded) totally geodesic submanifolds, so the classical result of Frankel comes into play (see \cite{Frankel61}). Wilking proved a vast generalization of Frankel's result, and we reproduce part of it here (see \cite[Theorem 2.1]{Wilking03}).

\begin{theorem}[Connectedness lemma]\label{thm:Connectedness}
Let $M^n$ be a closed, positively curved Riemannian manifold.
	\begin{enumerate}
	\item If $N^{n-k}$ is a closed, totally geodesic submanifold, then the inclusion is $(n-2k+1)$--connected.
	\item If $N_i^{n-k_i} \to M^n$ are two closed, totally geodesic submanifolds with $k_1 \leq k_2$, then the inclusion $N_1 \cap N_2 \to N_2$ is $(n - k_1 - k_2)$--connected.
	\end{enumerate}
\end{theorem}

We recall that an inclusion $N \to M$ is $c$--connected if the induced map on homotopy is isomorphisms in degrees less than $c$ and a surjection in degree $c$. In particular, we have the following consequences, which we will use repeatedly:
	\begin{itemize}
	\item If $\cod(N) \leq \frac{n-1}{2}$ in the first part of the lemma, then $\pi_1(N) \cong \pi_1(M)$.
	\item Suppose $N_i \subseteq M^{\iota_i}$ are fixed-point components containing $x$ of isometries $\iota_i$. If $n - k_1 - k_2 \geq 1$, then the intersection $N_1 \cap N_2$ is connected and hence coincides with the component containing $x$ of the fixed-point set $M^{\inner{\iota_1,\iota_2}}$ of the fixed-point set of the group generated by $\iota_1$ and $\iota_2$.
	\end{itemize}

We proceed to a discussion of three important conditions under which a simply connected positively curved Riemannian manifold is a rational homology sphere. The first is the following, which we quote frequently (for a proof, see \cite[Proposition 3.2]{AmannKennard14}):

\begin{lemma}[dk $2$ lemma]\label{lem:dk2}
Let $M^n$ be a closed, simply connected, positively curved Riemannian manifold with $n \equiv 1 \bmod{4}$, let $T$ be a torus acting isometrically and effectively on $M$, and let $T' \subseteq T$ denote a subtorus that fixes a point $x \in M$.

If there exists an involution $\sigma \in T'$ such that the component $N\subseteq M^\sigma$ containing $x$ has $\dim(N) \geq \frac{n}{2}$ and is fixed by another involution in $T'$, then $M$ is a rational sphere. In particular, this applies if the dimension of the kernel of the induced $T'$--action on $N$ is at least two.
\end{lemma}

The second and third sufficient conditions can also be regarded as statements about the case where $M$ is not a rational homology sphere. The first has a strong symmetry assumption and implies that every involution in the torus has large codimension. The second has a weak symmetry assumption and implies that some involution has large codimension.

\begin{proposition}\label{pro:AKsqrt}
Let $M^n$ be a closed, simply connected, positively curved Riemannian manifold with $n \equiv 1 \bmod{4}$ and $T^r$ symmetry such that $r \geq 2 \log_2 n + \frac{k}{2} + 1$ for some positive $k \leq \frac{n+2}{4}$. One of the following occurs:
	\begin{itemize}
	\item $M$ is a rational homology sphere.
	\item Every involution in $T^r$ has fixed-point set of codimension greater than $k$.
	\end{itemize}
In particular, if $n > 81$ and $r \geq 2 \sqrt{n}$, then either $M$ is a rational sphere or every involution has fixed point set of codimension greater than $\sqrt{n}$.
\end{proposition}

The first part of statement is simply a rephrasing of Proposition 3.3 in \cite{AmannKennard14}. The last statement follows immediately by taking $k = \sqrt{n}$ and confirming the estimate
	\[2 \sqrt{n} \geq 2\log_2 n + \frac{\sqrt{n}}{2} + 1\]
for all $n \geq 85$.

The third sufficient condition is the following. It is a slight adaptation of Proposition 1.1 in \cite{AK3} that is better suited to our situation.

\begin{proposition}\label{pro:AKlog}
Assume $n\equiv 1\bmod{4}$ and $n\geq 25$. Let $M^n$ be a closed, simply connected Riemannian manifold with positive sectional curvature, and let $T^s$ be a torus acting effectively by isometries on $M$ with fixed point $x$. If $s > \log_{4/3}\of{\frac{n+3}{6}} - 2$, then one of the following occurs:
\begin{itemize}
\item $M$ is a rational homology sphere.
\item There exists an involution $\iota \in T^s$ such that the component $N \subseteq M^\iota$ of the fixed-point set containing $x$ satisfies $\frac{n-1}{4} < \cod(N) \leq \frac{n-1}{2}$, and $\dim\ker(T^s|_N) \leq 1$.
\end{itemize}
\end{proposition}

Here, and throughout this article, if $T$ is a torus acting on a manifold $M$, and if $N \subseteq M$ is a submanifold to which the $T$--action restricts, then $\dim\ker(T|_N)$ denotes the dimension of the kernel of the induced $T$--action on $N$. Note that $N$ admits an isometric torus action of rank at least $\dim(T) - \dim\ker(T|_N)$.

\begin{proof}
First, it is immediate from the definition of periodic cohomology (see \cite[Definition 1.8]{Kennard13}) that $M$ is a rational homology sphere if and only if it has $4$--periodic rational cohomology.

The proof now parallels the proof of Proposition 1.1 in \cite{AK3}. The slight modification is that, here, we set $c = 1$, $t = 1$, $k_0  = \frac{n-1}{4}$, and $j = \floor{\log_2(k_0)}$. As there, the proof now follows immediately from Lemmas 1.2 and 1.3 in \cite{AK3} assuming a certain inequality involving the rank of the torus and the dimension of the manifold. We are referring to Inequality (1.1) in \cite{AK3}, reproduced here for $c = 1$, $t = 1$, and $n \equiv 1 \bmod{4}$:
\begin{equation}\label{ine:1.1}
\frac{n-1}{2} < j - 1 + \sum_{i=0}^{s - j} \ceil{\frac{n+3}{2^{i+2}}}.
\end{equation}

Dropping the ceiling functions in the proposed estimate and rearranging, we see that this inequality holds if
	\[s > \log_2(n+3) + j - \log_2(j+1) - 2.\]
One can check by hand that this holds for $25 \leq n \leq 45$. In addition, since $j \leq \log_2\pfrac{n-1}{4}$ and $j > \log_2\pfrac{n-1}{4} - 1$, Inequality (\ref{ine:1.1}) holds if
	\[s > \log_2(n+3) + \log_2(n-1) - \log_2(\log_2(n-1) - 2) - 4.\]
This inequality holds for $n = 49$ and hence for all $n \geq 49$ since the left-hand side grows faster than the right-hand side.
\end{proof}

\bigskip\section{Free actions on mod $p$ spheres}\label{sec:MorePeriodicity}\bigskip

In the classification of spherical space forms, it is shown that a finite group acts freely on sphere only if every abelian subgroup is cyclic. Smith proved a localized version of this, which we state here for manifolds (also see Bredon \cite[III.8.1]{Bredon72}):

\begin{theorem}[Smith, \cite{Smith44}]\label{thm:Smith44}
If $M$ is a mod $p$ homology sphere, then $\Z_p \times \Z_p$ does not act freely on $M$.
\end{theorem}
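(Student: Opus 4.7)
The plan is to argue by contradiction using the Borel construction. Suppose $G = \Z_p \times \Z_p$ acts freely on $M$. Since the action is free, the Borel space $M_G := EG \times_G M$ is homotopy equivalent to the finite-dimensional CW complex $M/G$, and in particular $H^*(M/G; \mathbb{F}_p)$ is finite-dimensional over $\mathbb{F}_p$.

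Next, I would analyze the Serre spectral sequence of the Borel fibration $M \to M_G \to BG$. Because $\mathrm{Aut}(\mathbb{F}_p) = \mathbb{F}_p^\times$ has order coprime to $p$, every homomorphism from the $p$-group $G$ into it is trivial, so the local coefficient system is trivial and
$$E_2^{s,t} = H^s(BG; \mathbb{F}_p) \otimes H^t(M; \mathbb{F}_p).$$
Since $M$ has the mod $p$ cohomology of $S^n$, only the rows $t = 0$ and $t = n$ are nonzero, with the top row a free rank-one module over the bottom row generated by a class $u \in H^n(M;\mathbb{F}_p)$. The only possibly nontrivial differential is the transgression $d_{n+1}(u) = \tau \in H^{n+1}(BG;\mathbb{F}_p)$; since $d_{n+1}$ is a derivation determined by this value, its image is exactly the principal ideal $(\tau)$, and the spectral sequence collapses at $E_{n+2}$. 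Consequently the edge homomorphism realizes $H^*(BG;\mathbb{F}_p)/(\tau)$ as a subring of $H^*(M/G;\mathbb{F}_p)$.

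The crux is then to prove that $H^*(BG;\mathbb{F}_p)/(\tau)$ is infinite-dimensional over $\mathbb{F}_p$, which contradicts the finiteness established above. I would invoke the standard identifications $H^*(BG;\mathbb{F}_2) \cong \mathbb{F}_2[x_1, x_2]$ with $|x_i| = 1$, and, for odd $p$, $H^*(BG;\mathbb{F}_p) \cong \Lambda(e_1, e_2) \otimes \mathbb{F}_p[x_1, x_2]$ with $|x_i| = 2$. In each case the reduction modulo the nilradical is a polynomial ring in two variables of Krull dimension $2$, so by Krull's principal ideal theorem the quotient by the image of $\tau$ has Krull dimension at least $1$ and is therefore infinite-dimensional as an $\mathbb{F}_p$-vector space; since Krull dimension is invariant under quotienting by nilpotents, the same conclusion holds for $H^*(BG;\mathbb{F}_p)/(\tau)$ itself. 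This commutative-algebra step is the only genuine obstacle; the spectral-sequence bookkeeping above is routine once one notices that $d_{n+1}$ is determined as a module derivation by its value on $u$.
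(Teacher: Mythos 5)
Your proof is correct, and it is one of the standard modern proofs of Smith's theorem. Note, however, that the paper gives no proof of this statement at all: it simply cites the result to Smith and to Bredon~(III.8.1), so there is no in-text argument to compare against. Smith's original 1944 argument, and Bredon's treatment, go through Smith theory: one works with the special cohomology groups built from $\sigma=1+t+\cdots+t^{p-1}$ and $\delta=1-t$ in $\mathbb{F}_p[\Z_p]$, their long exact sequences, and a dimension count, rather than with classifying spaces. You instead use the Borel fibration $M\to M_G\to BG$ and reduce everything to the commutative-algebra fact that $H^*(B(\Z_p\times\Z_p);\mathbb{F}_p)$ has Krull dimension two and so cannot become finite-dimensional after quotienting by a single principal ideal. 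The two routes are closely related in spirit (both are detecting that the $p$-rank of a group acting freely on a mod~$p$ sphere is at most one), but yours buys more generality and scalability---it is the germ of the Quillen stratification / localization point of view---while the Smith-theory argument is more self-contained and avoids classifying spaces. Two small points you should make explicit if you write this up carefully: the finite-dimensionality of $H^*(M/G;\mathbb{F}_p)$ uses that $M$ is compact (or finitistic), which is part of Smith's hypotheses and holds here since $M$ is a closed manifold; and the Krull-dimension argument must also cover the degenerate case $\tau=0$, where the quotient is all of $H^*(BG;\mathbb{F}_p)$ and the conclusion is immediate.
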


Here and throughout the paper, a mod $p$ (resp. rational) homology sphere is a smooth manifold whose mod $p$ (resp. rational) homology is that  of a sphere. We will apply Smith's theorem to conclude the following, the main result of this section:

\begin{proposition}\label{pro:MorePeriodicity}
Let $p\geq 3$ be prime, and let $n$ be an odd integer greater than $\max(19,2p^2)$. If $M^n$ is a closed,  simply connected Riemannian manifold with positive sectional curvature that contains two transversely intersecting, totally geodesic submanifolds of codimension $2p$, then $M$ is a mod~$p$ homology sphere. In particular, $M$ does not admit a free action by $\Z_p \times \Z_p$.
\end{proposition}

This result can be viewed as a refinement of the periodicity theorem due to the author (see \cite[Theorem B]{Kennard13}, and more specifically \cite[Proposition 2.1]{Kennard13}). That theorem immediately implies that the manifold is a rational homology sphere. To exclude free actions by $\Z_p \times \Z_p$, however, we need to know that the manifold is also a mod $p$ homology sphere.

We remark that Proposition \ref{pro:MorePeriodicity} is enough to exclude the possibility of $\pi_1(M) \supseteq \Z_3 \times \Z_3$ in Theorem \ref{thm:SunWang} of Sun and Wang in the introduction. However we postpone discussion of this until later, since the other obstructions we prove allow us to say more in this setting.

\begin{proof}
Note that the last conclusion is simply Smith's theorem, so it suffices to prove the first conclusion. It is straightforward to see that the assumptions on $p$ and $n$ imply that $2p \leq \frac{n+3}{4}$. Given this, we proceed as in the proof of the periodicity theorem in \cite{Kennard13} (see Definition 1.1 and Section 4 in \cite{Kennard13}). By the connectedness lemma, the existence of the two transversely intersecting, totally geodesic submanifolds of small codimension implies that $H^*(M;\Z_p)$ is $(2p)$--periodic. The theorem now follows from Lemma \ref{lem:MorePeriodicityAlg} below.
\end{proof}

\begin{lemma}\label{lem:MorePeriodicityAlg}
Let $M^n$ be a closed, simply connected manifold of odd dimension. Let $p$ be an odd prime such that $2p^2 \leq n$. If $H^*(M;\Z_p)$ is $(2p)$--periodic, then $M$ is a mod $p$  sphere.
\end{lemma}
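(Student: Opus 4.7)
The plan is to reduce the hypothesis of $(2p)$-periodicity to $2$-periodicity via Steenrod operations, from which the sphere conclusion follows quickly using simple connectivity and Poincar\'e duality.

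Let $x \in H^{2p}(M;\Z_p)$ denote the periodicity class. I would first handle the easy case $x = 0$: the requirement that $\cup x : H^i(M;\Z_p) \to H^{i+2p}(M;\Z_p)$ be an isomorphism for $0 < i < n-2p$ collapses to an isomorphism of zero maps, forcing $H^i(M;\Z_p) = 0$ throughout this range. Poincar\'e duality extends the vanishing to $2p < i < n$, and since $n \geq 2p^2 > 4p$ (for $p \geq 3$), these two ranges cover all of $(0,n)$, giving the conclusion. So I may assume $x \neq 0$.

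Next I would establish that $x^p \neq 0$. A short induction via the periodicity isomorphism yields $x^k \neq 0$ in $H^{2pk}(M;\Z_p)$ for every $k$ with $2pk \leq n$. Since $n \geq 2p^2$ (and in fact $n > 2p^2$, as $n$ is odd), the class $x^p$ lies in $H^{2p^2}(M;\Z_p)$ and is non-zero. The unstable Steenrod axiom then gives $\mathcal{P}^p(x) = x^p \neq 0$.

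The technical heart of the argument, and its main obstacle, is to extract from this non-triviality a second periodicity class of smaller degree. I would argue that some intermediate Steenrod power $\mathcal{P}^k(x)$ with $1 \leq k < p$ must be non-zero (using that the Adem relations would force $\mathcal{P}^p(x)$ itself to vanish if all intermediate powers did), and then promote this non-zero class to a genuine periodicity class of degree $2p + 2k(p-1)$ using the Cartan formula and the original periodicity isomorphism. Invoking Kennard's Periodicity Theorem from \cite{Kennard1} (which combines periodicities of degrees $d_1,d_2$ with $d_1 + d_2 \leq n$ into one of degree $\gcd(d_1,d_2)$), and computing $\gcd(2p,\, 2p + 2k(p-1)) = 2\gcd(p,k) = 2$ for $1 \leq k < p$, I would conclude that $H^*(M;\Z_p)$ is $2$-periodic. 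The hypothesis $d_1 + d_2 \leq n$ is verified for the choice $k = 1$ by $6p - 2 \leq 2p^2 \leq n$, which holds for $p \geq 3$; this is precisely where the dimensional bound $2p^2 \leq n$ is consumed.

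Finally, with $H^*(M;\Z_p)$ $2$-periodic and $H^1(M;\Z_p) = 0$ by simple connectivity, the periodicity isomorphism propagates the vanishing to $H^{1+2k}(M;\Z_p) = 0$ throughout the range, so every odd cohomology group in $(0,n)$ vanishes. Since $n$ is odd, Poincar\'e duality $H^i(M;\Z_p) \cong H^{n-i}(M;\Z_p)$ then forces the even cohomology groups in $(0,n)$ to vanish as well, completing the proof that $M$ is a mod $p$ homology sphere. The hardest step is clearly the production of the second periodicity: non-vanishing of some $\mathcal{P}^k(x)$ is formal, but certifying that $\cup\mathcal{P}^k(x)$ is an \emph{isomorphism} in range (and not merely a non-zero map) is the delicate part, and uses both the Cartan formula and the cup-product structure inherited from $x$-periodicity.
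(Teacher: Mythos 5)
The proposal has a genuine gap at what you yourself identify as the technical heart: the claim that the Adem relations force $P^p(x)$ to vanish whenever all intermediate powers $P^k(x)$ with $1 \leq k < p$ vanish. This is false. The element $P^p$ is \emph{indecomposable} in the mod~$p$ Steenrod algebra: it cannot be written as a composition of $\beta$'s and lower $P^i$'s. (Concretely, the Adem relations yield such decompositions for $P^j$ when $j$ is not a power of $p$, via $j!\,P^j = (P^1)^j$ and similar identities, but $P^1, P^p, P^{p^2}, \dots$ are primitive generators.) This indecomposability is precisely what makes the Hopf invariant one problem and its odd-prime analogue non-formal, and it is why Adams, Liulevicius, and Shimada--Yamanoshita had to introduce \emph{secondary} cohomology operations. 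So the ``formal'' step on which your whole route rests is not available.

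Worse, in this specific situation the intermediate powers are all zero, so the branch you are trying to enter is vacuous. The paper proves that $H^{2p-2}(M;\Z_p) = 0$ by a careful inductive argument with the Cartan formula, periodicity, and Poincar\'e duality, after which $P^1(x) \in H^{4p-2} \cong H^{2p-2} = 0$ by periodicity, and then $k!\,P^k(x) = (P^1)^k(x) = 0$ kills all of $P^2(x), \dots, P^{p-1}(x)$ as well. There is no ``nonzero intermediate $P^k(x)$'' to promote. What the paper does instead is invoke the secondary decomposition of $P^p$ (Theorem \ref{thm:AdamsFactorization}): since $\beta(x) = 0$ and $P^1(x) = 0$, one has $x^p = P^p(x) = \beta(w_0) + P^1(w_1)$, and then periodicity forces $w_i = x^{p-1} y_i$, so by the Cartan formula $x = \beta(y_0) + P^1(y_1)$ --- contradicting the minimality of the degree of $x$ among periodicity elements, since \cite[Lemma 2.3]{Kennard1} and the Adem relation $P^p\beta = P^1\beta P^{p-1} + \beta P^p$ show no nontrivial multiple of $x$ can be $P^i(y)$ or $\beta(y)$. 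The surrounding scaffolding in your proposal (handling $x = 0$, $x^p \neq 0$, $\gcd(2p, 2p+2k(p-1)) = 2$, collapsing $2$-periodicity to the sphere via simple connectivity and Poincar\'e duality) is fine, but without the secondary decomposition the argument does not close.
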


This result refines Proposition 2.1 in \cite{Kennard13}, the proof of which uses mod $p$ Steenrod cohomology operations and applies not just to manifolds. Here we restrict to manifolds since the proof of Lemma \ref{lem:MorePeriodicityAlg} uses Poincar\'e duality. In addition, we require a {\it secondary factorization} by secondary cohomology operations due to Liulevicius and Shimada--Yamanoshita. After setting up the notation and some preliminaries, we prove the lemma.

First, we recall the existence of the Steenrod powers
	\[P^i:H^*(M;\Z_p) \longrightarrow H^{* + 2(p-1)i}(M;\Z_p)\]
and the Bockstein homomorphism
	\[\beta : H^*(M;\Z_p) \longrightarrow H^{* + 1}(M;\Z_p)\]
associated to the short exact sequence
	\[0 \longrightarrow \Z_2 \longrightarrow \Z_4 \longrightarrow \Z_2 \longrightarrow 0.\]
See \cite[Section 2]{Kennard13} or Hatcher \cite[Section 4.L]{Hatcher01} for a summary of the basic properties of these operations.

Second, we require a {secondary decomposition} of Steenrod powers. Such decompositions of Steenrod squares were developed by Adams in his resolution of the Hopf invariant one problem (see \cite{Adams60}). Liulevicius, Shimada, and Yamanoshita developed odd prime analogues for the Steenrod powers (see  \cite{Liulevicius62,ShimadaYamanoshita61}). We only require here the following consequence (see Harper \cite[Theorem 6.2.1.b]{Harper02}):

\begin{theorem}[A secondary decomposition of Steenrod powers]\label{thm:AdamsFactorization}
For any space $M$ and $x \in H^k(M;\Z_p)$, if $\beta(x) = 0$ and $P^1(x) = 0$, then there exist cohomology elements $w_0,w_1\in H^*(M;\Z_p)$ such that
	\[P^{p}(x) = \beta(w_0) + P^1(w_1).\]
\end{theorem}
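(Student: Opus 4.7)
The approach follows the classical method of Adams at $p=2$ adapted to odd primes by Liulevicius and Shimada--Yamanoshita: realize the hypothesis $\beta x = 0 = P^1 x$ via a universal two-stage Postnikov fibration, and then verify the identity on the universal class by a Serre spectral sequence computation.

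Set $L = K(\Z_p, k+1) \times K(\Z_p, k + 2p - 2)$ and let $F \to K(\Z_p, k) \xrightarrow{(\beta, P^1)} L$ be the homotopy fiber of the map $(\beta, P^1)$ applied to the fundamental class $\iota_k$. A cohomology class $x \in H^k(X; \Z_p)$ satisfying $\beta x = 0 = P^1 x$ is classified by a lift $\tilde{x} \colon X \to F$. By naturality, the theorem reduces to producing, once and for all, universal classes $w_0', w_1' \in H^*(F; \Z_p)$ with $P^p(\iota_k)|_F = \beta(w_0') + P^1(w_1')$; for the given $x$ one then sets $w_i = \tilde{x}^*(w_i')$.

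The core step is to compute $H^*(F; \Z_p)$ through total degree $k + 2p(p-1)$ using the Serre spectral sequence of $F \to K(\Z_p, k) \to L$. Invoking Cartan--Serre's description of $H^*(K(\Z_p, m); \Z_p)$ as a free graded-commutative algebra on admissible monomials applied to $\iota_m$, one tracks the transgressions landing near $P^p(\iota_k)$. The essential algebraic input is the Adem relation $P^1 P^{p-1} = -p(p-2)\,P^p \equiv 0 \pmod{p}$, together with its companions expressing $\beta P^{p-1}$ and $P^{p-1}\beta$ in terms of $P^p$; these relations force $P^p(\iota_k)|_F$ into the image of $\beta$ and $P^1$ acting on $H^*(F; \Z_p)$, producing the required $w_0'$ and $w_1'$.

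The main obstacle is precisely this spectral sequence bookkeeping. One must verify that no admissible monomial of total degree $k + 2p(p-1)$ in $H^*(F;\Z_p)$ obstructs the decomposition, and that the indeterminacy of the resulting secondary operation is exactly $\im(\beta) + \im(P^1)$ rather than something larger. This amounts to a finite but intricate computation in $\mathcal{A}_p$, exploiting the Adem relations together with the vanishing of certain binomial coefficients modulo $p$; it is this computation that constitutes the main content of \cite{Liulevicius62} and \cite{ShimadaYamanoshita61} and is summarized in the form stated here in Harper's exposition \cite{Harper02}.
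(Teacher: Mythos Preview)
The paper does not prove this theorem at all: it is quoted as a black-box consequence of the work of Liulevicius and Shimada--Yamanoshita, with a pointer to Harper \cite[Theorem 6.2.1.b]{Harper02} for a statement in the form needed. So there is no ``paper's own proof'' to compare against; the theorem functions here purely as an imported tool in the proof of Lemma~\ref{lem:MorePeriodicityAlg}.

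Your outline is the standard one and is broadly correct in spirit: realize the hypotheses $\beta x = 0 = P^1 x$ by a lift to the homotopy fiber of $(\beta,P^1)\colon K(\Z_p,k)\to K(\Z_p,k+1)\times K(\Z_p,k+2p-2)$, and then verify the decomposition on the universal class via the Serre spectral sequence and the structure of $H^*(K(\Z_p,m);\Z_p)$ as an $\mathcal A_p$--module. That said, what you have written is a roadmap rather than a proof. The actual content---identifying the precise relation in $\mathcal A_p$ that underlies the secondary operation, computing the relevant transgressions, and pinning down the indeterminacy as exactly $\im\beta + \im P^1$---is deferred entirely to the cited references, which is also what the paper does. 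In short: your sketch matches the approach of the original sources the paper cites, but neither you nor the paper supplies the computation; both rely on \cite{Liulevicius62}, \cite{ShimadaYamanoshita61}, and \cite{Harper02}.
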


We spend the rest of this section on the proof of Lemma \ref{lem:MorePeriodicityAlg}. We will denote $2p$ by $k$, $H^i(M;\Z_p)$ by $H^i$, and $H^*(M;\Z_p)$ by $H^*$.

Let $x \in H^k$ be an element inducing periodicity in $H^*$. First note that, if $x = 0$, then the conclusion is trivial. Second, since $k = 2p$, if another element induces periodicity with degree less than $k$, then $H^*$ is $2$--periodic by a proof similar to that of \cite[Lemma 3.2]{Kennard13}. Since $M$ is simply connected and odd-dimensional, this would imply that $M$ is a mod $p$ sphere. 

Suppose therefore that $x$ is nonzero and has minimal degree among all elements inducing periodicity. In particular, $x^p \neq 0$, $k$ does not divide $n-1$, and, by \cite[Lemma 2.3]{Kennard13}, no nontrivial multiple of $x$ is of the form $P^i(y)$ with $i > 0$. Similarly, we have the following:

\begin{lemma}
No nontrivial multiple of $x$ is of the form $\be(y)$.
\end{lemma}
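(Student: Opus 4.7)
The plan is to argue by contradiction: suppose $cx = \beta(y)$ for some $c \in \Z_p \setminus \{0\}$ and some $y \in H^{2p-1}(M;\Z_p)$, and after absorbing $c$ into $y$ take $x = \beta(y)$. The two standing algebraic inputs are the derivation property of $\beta$ combined with $\beta^2 = 0$ --- which gives $\beta(x) = 0$, and hence $x\cdot a = \beta(ya) + y\beta(a)$ for every $a$, the sign coming from $|y| = 2p-1$ being odd --- together with the vanishing of $\beta : H^{n-1}(M;\Z_p) \to H^n(M;\Z_p)$ on any closed orientable $n$-manifold (since $H^n(M;\Z) = \Z$ is torsion-free).

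The first step is to locate the fundamental class using periodicity. Write $n = 2pq + r$ with $0 < r < 2p$; the facts that $n$ is odd, $2p$ is even, and $2p \nmid n-1$ (noted just before this lemma) force $r$ to be odd and $\ge 3$. Periodicity then gives an isomorphism $\cdot x^q : H^r(M;\Z_p) \xrightarrow{\cong} H^n(M;\Z_p) \cong \Z_p$, so the fundamental class satisfies $[M]^* = x^q z$ for some generator $z$ of $H^r$. Applying the derivation identity yields
\[
[M]^* \;=\; \beta(y)\cdot x^{q-1}z \;=\; \beta\bigl(y\,x^{q-1}z\bigr) \,+\, y\,x^{q-1}\beta(z),
\]
and since $y\,x^{q-1}z$ has degree $n-1$, the first summand vanishes, leaving $[M]^* = y\,x^{q-1}\beta(z)$; in particular $\beta(z) \neq 0$.

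The hard part is converting this identity into a contradiction with the minimality of $|x|$. The model I would follow is the proof of \cite[Lemma~2.3]{Kennard1}, invoked just above, which handles the case $cx = P^i(y')$ by promoting $y'$ into a periodicity generator of strictly smaller degree. The natural analogue here is to promote $\beta(z) \in H^{r+1}(M;\Z_p)$ into a periodicity generator of period $r+1 < 2p$; the identity $[M]^* = x^{q-1}\cdot y\beta(z)$ already shows that multiplication by $\beta(z)$ hits the top class. To deduce full $(r+1)$-periodicity I expect to combine Poincar\'e duality, the self-adjointness of $\beta$ under the PD pairing, and the vanishing $H^{r+2p-1}(M;\Z_p) = 0$ (which follows by PD and periodicity from $H^1(M;\Z_p) = 0$ via the chain $H^{r+2p-1} \cong H^{2p(q-1)+1} \cong \cdots \cong H^1$). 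The most delicate step will be upgrading the single isomorphism $\cdot y\beta(z) : H^{2p(q-1)} \xrightarrow{\cong} H^n$ to a chain of periodicity isomorphisms throughout the allowed range, likely by exploiting the relation $y\beta(z) = xz$ (which is forced by applying $\cdot x^{q-1}$ to the unique expression of $y\beta(z) \in H^{r+2p} \cong \Z_p\cdot xz$ as a multiple of $xz$).
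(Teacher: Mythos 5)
Your proposal departs substantially from the paper's argument, and I do not think it can be completed as sketched. The paper's proof is a three-line computation using an odd-primary Adem relation: starting from $cx = \beta(y)$ one applies $P^p$ and uses
\[
P^p\beta = P^1\beta P^{p-1} + \beta P^p
\]
to rewrite $x^p = P^p\beta(y) = P^1\beta P^{p-1}(y) + \beta P^p(y)$; the first summand dies by \cite[Lemma~2.4]{Kennard1} and the second dies for degree reasons since $\deg(y) = 2p-1 < 2p$, so $x^p = 0$, contradicting the already-established fact that $x^p \neq 0$. Your route never touches the Adem relation at all.

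The part of your argument that is carried out is correct: the degree bookkeeping ($n = 2pq + r$ with $r$ odd and $r \ge 3$), the Leibniz computation giving $[M]^* = y\,x^{q-1}\beta(z)$, and hence $\beta(z) \neq 0$, all check out, as does $y\beta(z) = xz$. But the argument then stalls at exactly the step you flag as ``the most delicate''; showing that $\beta(z) \in H^{r+1}$ induces periodicity is not a small gap but the entire content of the claim, and nothing in your sketch shows how to get from the single surjection $\cdot\, y x^{q-1}\beta(z) : H^0 \to H^n$ to periodicity of $H^*$ in degree $r+1$. Worse, there is a case your reduction cannot handle even in principle: nothing in the hypotheses rules out $2p \mid n+1$, in which case $r = 2p-1$ and $\beta(z)$ sits in $H^{2p}$, the same degree as $x$. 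Then even a complete proof that $\beta(z)$ induces periodicity would produce a generator of degree $2p$, not of strictly smaller degree, so there is no contradiction with minimality --- the argument becomes circular, since $\beta(z) = cx$ is itself another instance of the very expression you assumed. I would abandon the Poincar\'e-duality promotion strategy and instead look up the Adem relation $P^p\beta = P^1\beta P^{p-1} + \beta P^p$; once you have it, the lemma falls out in two lines.
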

\begin{proof}
Indeed, if such an expression exists, the Adem relation
	\[P^p \be = P^1 \be P^{p-1} + \be P^p\]
would imply
	\[x^p = P^p(x) = P^p\be(y) = P^1 \be P^{p-1}(y) + \be P^p(y).\]
But the first term on the right-hand side vanishes by \cite[Lemma 2.4]{Kennard13}, and the second term vanishes since $2p > \deg(y)$, so $x^p = 0$. This would contradict our assumption that $x^p \neq 0$.
\end{proof}

Next, choose $m \in \{2,4,6,\ldots,k-2\}$ such that $k$ divides $n-1-m$. We prove the following:
\begin{lemma}
The groups $H^m$, $H^{m+2}$,\ldots, $H^{k-2}$ vanish.
\end{lemma}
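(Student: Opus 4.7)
The plan is to handle the base case $H^m = 0$ via Poincaré duality and iterated periodicity, and then to reduce the remaining claims to the vanishing of odd-degree cohomology, which I would tackle using Theorem \ref{thm:AdamsFactorization} together with the preceding lemmas about multiples of $x$.

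For the base case, Poincaré duality yields $H^m \cong H^{n-m}$. The choice of $m$ gives $n - m = 1 + \ell k$ with $\ell \geq p$ (using $n \geq 2p^2$), and iterated multiplication by $x$ provides a chain of isomorphisms $H^1 \cong H^{1+k} \cong \cdots \cong H^{1+\ell k}$, each step in the valid periodicity range since $1 + j k \leq n - k$ for $0 \leq j \leq \ell - 1$. Because $M$ is simply connected, $H^1 = 0$, so $H^m = 0$.

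For $H^{m + 2i}$ with $1 \leq i \leq (k-m-2)/2$, Poincaré duality combined with periodicity gives $H^{m + 2i} \cong H^{k - 2i + 1}$, an odd-degree group in $\{m+3, m+5, \ldots, k-1\}$. I would prove that these vanish by descending induction on the degree. For a hypothetical nonzero class $y \in H^{k-2i+1}$, the image $\beta(y) \in H^{k-2i+2}$ lies in a group already controlled, either by the inductive hypothesis or by Poincaré duality combined with the $H^m = 0$ base case, forcing $\beta(y) = 0$; a parallel analysis of the degree of $P^1(y)$ modulo $k$ gives $P^1(y) = 0$. Applying Theorem \ref{thm:AdamsFactorization} to $y$ yields $P^p(y) = \beta(w_0) + P^1(w_1)$, while instability forces $P^p(y) = 0$ because $|y| < 2p = k$. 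Using periodicity to write $w_0 = x^{p-1} u_0$ and $w_1 = x^{p-1} u_1$ in the fundamental period, and using $\beta(x) = 0$ together with the Cartan formula, this reduces to a relation in a low-degree cohomology group. Combining with the preceding lemmas that no nontrivial multiple of $x$ lies in the image of $\beta$ or $P^i$ then yields the desired contradiction.

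The main obstacle is the careful bookkeeping of degrees modulo $k$ throughout this argument: at each inductive step one must verify that the classes produced by the Adams factorization reduce, via periodicity and multiplication by $x^{p-1}$, to cohomology groups where the previous vanishings or the preceding lemmas actually apply. The hypothesis $n \geq 2p^2$ is precisely what provides enough room for these $x^{p-1}$-reductions to be valid, allowing the argument to close.
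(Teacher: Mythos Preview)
Your base case agrees with the paper's. The inductive step, however, has a genuine gap: the claim that $\beta(y)$ and $P^1(y)$ land in groups ``already controlled'' by the induction is false in general. Take $p=5$, $k=10$, $m=2$; the odd degrees in play are $5,7,9$. To kill $H^7$ you would need $\beta(y)\in H^8\cong H^5$ to vanish, while to kill $H^5$ you would need $\beta(y)\in H^6\cong H^7$ to vanish, so the $\beta$--dependencies are circular regardless of the direction of induction. Worse, for $y\in H^5$ one has $P^1(y)\in H^{13}\cong H^3$, and $H^3\cong H^{n-3}\cong H^0=\Z_p$ is \emph{nonzero}, so $P^1(y)=0$ cannot come from degree bookkeeping alone. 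There is a further circularity: your Cartan reduction of $\beta(w_0)+P^1(w_1)$ tacitly uses $P^1(x)=0$, but in the paper this is deduced from $H^{k-2}=0$, i.e., from the very lemma under discussion. (Incidentally, $|w_1|=|y|+k(p-2)+2$, so one gets $w_1=x^{p-2}u_1$, not $x^{p-1}u_1$.)

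The paper's argument is entirely different and does not invoke the secondary decomposition in this lemma. Given $H^l=0$ and a hypothetical $0\neq u\in H^{l+2}$, Poincar\'e duality and periodicity produce a relation $uv=xz$ with $z$ a generator of $H^{m+1}\cong\Z_p$. The inductive hypothesis enters exactly once, and in a place where it actually applies: $P^1(u)\in H^{l+k}\cong H^l=0$, whence $P^i(u)=0$ for $0<i<p$ via $i!\,P^i=(P^1)^i$. A Cartan computation gives $P^p(uv)=0$, while a separate self-contained argument shows $P^p(xz)=x^pz\neq 0$, the desired contradiction. The Adams-type factorization (Theorem~\ref{thm:AdamsFactorization}) is reserved for the final step of Lemma~\ref{lem:MorePeriodicityAlg}, after $H^{k-2}=0$ --- and hence $P^1(x)=0$ --- has been secured.
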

\begin{proof}
First, the definition of periodic cohomology implies that 
	\[H^m \cong H^{m + k} \cong H^{m+2k} \cong \cdots \cong H^{n-1}\] 
since $k$ divides $n-1-m$. By Poincar\'e duality and the assumption that $M$ is simply connected, we conclude that $H^m = 0$. We proceed by induction. Suppose that $H^l = 0$ for some $l\in\{m, m+2, \ldots, k-4\}$, and suppose that $H^{l+2} \neq 0$. Using Poincar\'e duality and periodicity, we conclude the existence of a relation
	\[u v = xz\]
for some $u \in H^{l+2}$ and $v \in H^{k+m+1-(l+2)}$, where $z \in H^{m+1} \cong H^k \cong \Z_p$ is a generator. On one hand, $P^1(u) \in H^{k+l} \cong H^l = 0$, so the Adem relations imply
	\[i! P^i(u) = P^1P^1\cdots P^1(u) = 0\]
and hence $P^i(u) = 0$ for all $0 < i < p$. Applying the Cartan relation and the property that $P^p(u) = 0$ and $P^p(v) = 0$ since $2p > \deg(u)$ and $2p > \deg(v)$, respectively, we conclude
	\[P^p(uv) = P^p(u)v + \sum_{0<i<p} P^i(u) P^{p-i}(v) + uP^p(v) = 0.\]

On the other hand, we claim that $P^p(xz) = x^p z$, which is nonzero by periodicity. Indeed, $P^p(x)z = x^p z$ and $xP^p(z) = 0$ since $2p = \deg(x)$ and $2p > \deg(z)$. In addition, if we can show that $P^1(z) = 0$, it will follow from the relation $j! P^j(z) = P^1P^1\cdots P^1(z)$ that $P^j(z) = 0$ for all $0 < j < p$, and hence that
	\[P^p(xz) = x^p z + \sum_{0 < j < p} P^{p-j}(x) P^j(z) + x P^p(z) = x^p z.\]
It suffices to prove $P^1(z) = 0$. If it is nonzero, Poincar\'e duality and periodicity imply the existence of $w \in H^2$ such that $w P^1(z) = xz$. Applying the Cartan formula to this, we have
	\[xz = P^1(wz) - P^1(w) z = P^1(wz) - w^p z.\]
However $w^p \in H^k$ is zero by the minimality of $x$ (see \cite[Lemma 2.3]{Kennard13}), and, applying Poincar\'e duality and periodicity once more, we have that $wz = 0$ since $wz \neq 0$ would imply a relation
	\[0 \neq xz = w'(wz) = (w'w)z.\]
But $w'w \in H^k$ is zero by the minimality of $x$ (again see \cite[Lemma 2.3]{Kennard13}), so $P^1(z) = 0$ as claimed.
\end{proof}

We are ready to conclude the proof from the fact that $H^{k-2} = 0$. Since $\beta(x) \in H^{k+1} \cong H^1 \cong 0$ and $P^1(x) \in H^{k+2(p-1)} \cong H^{k - 2} \cong 0$, \thm{thm:AdamsFactorization} implies
	\[x^p = P^p(x) = \be(w_0) + P^1(w_1)\]
for some $w_0, w_1\in H^*$. Since $k = 2p$, one has that $w_0 = x^{p-1} y_0$ and $w_1 = x^{p-1} y_1$ for some $y_0,y_1 \in H^*$ of positive degree. Using again the fact that $\beta(x)=0$ and $P^1(x) = 0$, we have by the Cartan formula that
	\[x^p = x^{p-1}\be(y_0) + x^{p-1} P^1(y_1)\]
and hence, by periodicity, that
	\[x = \be(y_0) + P^1(y_1).\]
Using periodicity once more, we have that the nonzero element $x$ generates $H^k$, which implies that $\be(y_0)$ or $P^1(y_1)$ is a nontrivial multiple of $x$. This contradicts the minimality of $x$ established at the beginning of the proof.

\bigskip\section{Free actions on positively curved rational spheres}\label{sec:DavisWeinbergerBurnside}\bigskip

In this section, we state the Davis--Weinberger factorization, which obstructs certain free actions by finite groups on rational homology spheres of dimension $4k+1$. We also discuss examples and properties of finite groups with periodic cohomology, including classifications due to Burnside, Wolf, and Madsen--Thomas--Wall. We also provide in Example \ref{exa:SCC} an important class of finite groups that act freely on spheres in a way that commutes with large isometric torus actions.

\begin{proposition}[Davis--Weinberger factorization]\label{pro:DavisWeinberger}
Let $M^{4k+1}$ be a closed, simply connected manifold with the rational homology of a sphere. Assume $M$ admits a Riemannian metric with positive sectional curvature. If $\pi$ is a finite group acting freely by isometries on $M$, then
	$\pi \cong \Z_{2^e} \times \Gamma$
for some $e \geq 0$ and some group $\Gamma$ with odd order.
\end{proposition}

\begin{proof}
The conclusion is the same as that of Theorem D in \cite{Davis83}, so it suffices to check the assumptions. Indeed, a theorem of Weinstein shows that the $\pi$--action preserves orientation (see \cite{Weinstein68}), hence $\pi$ acts trivially on the rational cohomology of $M$. In addition, the rational semicharacteristic $\sum_{i=0}^{2k} \dim H_i(M;\Q) = 1$,
so the proof is complete.
\end{proof}

The author would like to thank D. Cooper and D.D. Long for directing him to the work of Davis and Weinberger. The assumption that the dimension is of the form $4k+1$ is crucial. In fact, Cooper and Long proved that any finite group can act freely on a rational homology sphere of dimension $4k+3$, for any $k\geq 0$ (see \cite{CooperLong00}, cf. Browder--Hsiang \cite{BrowderHsiang78}).

This factorization immediately implies the following.

\begin{corollary}[Milnor condition]
Let $M^{4k+1}$ be a simply connected, positively curved rational homology sphere. Suppose $\pi$ is a finite group that acts freely by isometries on $M$. If a nontrivial involution in $\pi$ exists, then it is both unique and central.
\end{corollary}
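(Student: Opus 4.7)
The plan is to deduce everything directly from the Davis--Weinberger factorization $\pi \cong \Z_{2^e} \times \Gamma$ with $|\Gamma|$ odd, which we are free to invoke. The point is that the structure of such a group severely restricts its elements of order two.

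First I would observe that any element of order two in a direct product $\Z_{2^e} \times \Gamma$ has the form $(a, g)$ with $a$ of order dividing $2$ in $\Z_{2^e}$ and $g$ of order dividing $2$ in $\Gamma$. Since $|\Gamma|$ is odd, Lagrange's theorem forces $g = 1$. Similarly, the cyclic group $\Z_{2^e}$ has a unique element of order two, namely $2^{e-1}$ (provided $e \geq 1$). Thus any involution in $\pi$ must equal $(2^{e-1}, 1)$.

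Next, the hypothesis that a nontrivial involution exists guarantees $e \geq 1$, so this element is actually present in $\pi$, establishing existence as well as uniqueness. For centrality, note that $(2^{e-1}, 1)$ lies in the first factor $\Z_{2^e} \times \{1\}$, which commutes elementwise with both $\Z_{2^e} \times \{1\}$ (since $\Z_{2^e}$ is abelian) and $\{0\} \times \Gamma$, hence with all of $\pi$.

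The only step requiring any real input is the appeal to Theorem \ref{thm:DavisWeinberger}; the rest is elementary group theory, and I do not anticipate any obstacle beyond verifying that the hypotheses of that theorem are met, which the corollary's assumptions guarantee verbatim.
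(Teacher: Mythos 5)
Your proof is correct and matches the paper's intent: the paper derives the corollary as an immediate consequence of the Davis--Weinberger factorization $\pi \cong \Z_{2^e} \times \Gamma$ with $\Gamma$ of odd order, and your elementary group-theoretic argument is precisely what makes that implication immediate. The paper's subsequent remarks about Milnor's and Smith's theorems are historical context rather than an alternative proof, so there is nothing to reconcile.
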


Milnor proved that, if a finite group acts freely on a closed manifold with the mod $2$ homology of a sphere, then every involution is central (see Milnor \cite{Milnor57} or Bredon \cite[Chapter III.8.3]{Bredon72}). Combined with Smith's theorem (Theorem \ref{thm:Smith44}), it follows that there is at most one nontrivial involution. (For a different proof of Milnor's result, see Davis \cite[Corollary 7.5]{Davis83}).

\smallskip

Next, we discuss another important property of the Davis--Weinberger factorization, which we will use repeatedly. Suppose $\pi \cong \Z_{2^e} \times \Gamma$ for some $e\geq 0$ and some odd-order group $\Gamma$. Both of the following hold:
	\begin{enumerate}
	\item $\pi$ contains a copy of $\Z_p\times\Z_p$ for some prime $p$ if and only if $\Gamma$ does.
	\item $\pi$ contains a normal, cyclic subgroup of odd index $d$ if and only if $\Gamma$ does.
	\end{enumerate}
In the context of our proofs, $\pi$ is a finite group acting freely on a simply connected manifold. Once it is established that $\pi$ factors as $\Z_{2^e} \times \Gamma$ as above, the proofs proceed by restricting attention to odd-order groups.  Moreover, for such groups, we have the following classification of Burnside (see Burnside \cite{Burnside05} or Wolf \cite[Theorems 5.3.2 and 5.4.1]{Wolf}):

\begin{theorem}[Burnside classification]\label{thm:Burnside}
For a finite group $\Gamma$ of odd order, $\Gamma\not\supseteq \Z_p\times\Z_p$ for all primes $p$ if and only if
	\[\Gamma \cong \langle\al,\be \st \al^a=\be^b=1,\be\al\be^{-1}=\al^c\rangle\]
for some $a,b,c\geq 1$ such that $\gcd(a,b) = 1$, $\gcd(a,c-1) = 1$, and $c^b \equiv 1 \bmod{a}$.
\end{theorem}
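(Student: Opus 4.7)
The result is the classical theorem of Burnside cited in the excerpt, so one option is simply to invoke it. For the purpose of this proposal, I sketch how I would prove it directly in two stages: first reduce to the case where every Sylow subgroup of $\Gamma$ is cyclic, then derive the metacyclic presentation by induction on $|\Gamma|$ using Burnside's normal $p$-complement theorem.

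\emph{Stage 1: Sylow subgroups are cyclic.} Fix a prime $p$ dividing $|\Gamma|$ and a Sylow $p$-subgroup $P$; so $P$ is an odd-order $p$-group containing no $\Z_p \times \Z_p$. I would proceed by induction on $|P|$: if some proper subgroup of $P$ contains $\Z_p \times \Z_p$ the hypothesis is already contradicted, so every proper subgroup of $P$ is cyclic, making $P$ either cyclic or minimal non-cyclic. A short classification argument shows that for odd $p$ every minimal non-cyclic $p$-group contains $\Z_p \times \Z_p$ (direct inspection settles orders $p^2$ and $p^3$; larger orders reduce via the center), forcing $P$ to be cyclic.

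\emph{Stage 2: Metacyclic presentation.} Induct on $|\Gamma|$. Let $p$ be the smallest prime dividing $|\Gamma|$ and $P$ its cyclic Sylow $p$-subgroup. Since $P$ is abelian, $P \leq C_\Gamma(P)$, and $N_\Gamma(P)/C_\Gamma(P)$ has order coprime to $p$, embeds in $\mathrm{Aut}(P)$, and hence has order dividing $p - 1$. Since every prime divisor of $|\Gamma|$ is at least $p$, this forces $N_\Gamma(P) = C_\Gamma(P)$. Burnside's normal $p$-complement theorem then yields $\Gamma = N \rtimes P$ with $N \triangleleft \Gamma$, and the inductive hypothesis applies to $N$. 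Tracking the semidirect product structure, the abelianization $\Gamma/\Gamma'$ is an abelian group whose Sylow subgroups are cyclic of pairwise coprime orders, hence cyclic of some order $b$, while $\Gamma'$ is cyclic of some order $a$ with $\gcd(a,b) = 1$. A splitting via Schur--Zassenhaus provides generators $\alpha$ of $\Gamma'$ and $\beta$ of a complement; the conjugation relation $\beta\alpha\beta^{-1} = \alpha^c$ comes from the action of $\beta$ on $\langle\alpha\rangle$, the relation $c^b \equiv 1 \pmod{a}$ is automatic from $\beta^b = 1$, and the condition $\gcd(a, c-1) = 1$ follows from the identity $\Gamma' = \langle \alpha^{c-1}\rangle$ together with $|\Gamma'| = a$.

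The main obstacle is the bookkeeping in Stage 2 needed to verify that $\Gamma'$ itself remains cyclic (and not merely metacyclic) through the induction, equivalently that $\Gamma/\Gamma'$ is cyclic of order coprime to $|\Gamma'|$. Once these structural properties are in hand, the remaining conditions on the presentation fall out immediately. The deep input throughout is Burnside's normal $p$-complement theorem applied at the smallest prime divisor of $|\Gamma|$.
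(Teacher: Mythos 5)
The paper does not prove this theorem; it simply cites it to Burnside \cite{Burnside05} and to Wolf \cite[Theorems 5.3.2 and 5.4.1]{Wolf}, so there is no in-paper argument to compare against. What you have written is a reasonable sketch of the classical proof, and Stage 1 (every Sylow subgroup of $\Gamma$ is cyclic, since an odd-order $p$-group with no $\Z_p\times\Z_p$ is cyclic) is correct. Stage 2 also identifies the right tool: taking $p$ to be the smallest prime dividing $|\Gamma|$, the cyclicity of the Sylow $p$-subgroup $P$ gives $|N_\Gamma(P)/C_\Gamma(P)| \mid p-1$, forcing $N_\Gamma(P)=C_\Gamma(P)$, and Burnside's normal $p$-complement theorem then yields $\Gamma = N\rtimes P$. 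The observations that $c^b\equiv 1\bmod a$ follows from $\beta^b=1$, and that $\gcd(a,c-1)=1$ is equivalent to $\Gamma'=\langle\alpha\rangle$ once one knows $|\Gamma'|=a$, are also fine.

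However, the step you flag as ``the main obstacle,'' namely that $\Gamma'$ is cyclic and $\gcd(|\Gamma'|,|\Gamma/\Gamma'|)=1$, is not bookkeeping — it is the content of the theorem, and your induction does not deliver it. Writing $\Gamma=N\rtimes P$ with $N$ metacyclic by induction tells you only that $N'\leq\Gamma'\leq N$ (the first inclusion because $N'$ is contained in $\Gamma'$, the second because $\Gamma/N\cong P$ is abelian); a subgroup of a metacyclic group sandwiched between $N'$ and $N$ need not be cyclic, and you have not argued that it is. The standard way to close this gap is a separate structural lemma: a finite group all of whose Sylow subgroups are cyclic is supersolvable (any chief factor that is a $p$-group is a quotient of a subgroup of the cyclic Sylow $p$-subgroup, hence cyclic of order $p$), so $\Gamma'$ lies in the Fitting subgroup and is nilpotent; a nilpotent group with cyclic Sylow subgroups is the direct product of those Sylows and hence cyclic. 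The coprimality of $|\Gamma'|$ and $|\Gamma/\Gamma'|$ requires a further transfer or focal-subgroup argument at each prime. Without these steps the proposal is an outline with the key lemma unproved; either supply that lemma or, as the paper does, cite the classical reference.
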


\begin{definition}\label{def:Cd}
Call a triple $(a,b,c)$ of positive integers \textit{admissible} if $\gcd(a,b) = 1$, $\gcd(a,c-1)=1$, and $c^b \equiv 1 \bmod{a}$. For each admissible triple $(a,b,c)$, let $\Gamma(a,b,c)$ denote the group generated by $\al$ and $\be$ subject to the relations $\al^a=1$, $\be^b = 1$, and $\be\al\be^{-1} = \al^c$. For each $d \geq 1$, let $\mathcal C_d$ denote the collection of all groups $\Gamma(a,b,c)$ such that $d$ is the order of $c$ in $\Z_a^\times$.
\end{definition}

Note that the definition of $\mathcal C_d$ implies further that $d = 1$ if and only if $\Gamma$ is cyclic. For $d > 1$, we make the following observations. It follows from the relation $\be\al\be^{-1} = \al^c$ that $\al$ and $\be^d$ commute. Moreover, since $\gcd(a,b) = 1$, these elements have coprime order and hence generate a cyclic subgroup of index $d$. It is easy to see that this subgroup is normal in $\Gamma$. Altogether, we have that every $\Gamma \in \mathcal C_d$ contains a normal, cyclic subgroup of index $d$. By the observation above, $\pi \cong \Z_{2^e} \times \Gamma$ also contains a normal, cyclic subgroup of index $d$. 

\smallskip

In the classification of spherical space forms in dimension $n \equiv 1 \bmod{4}$, the fundamental groups that arise can be described in terms of the groups $\Gamma(a,b,c) \in \mathcal C_d$.

\begin{theorem}[Vincent]\label{thm:Wolf}
A finite group $\pi$ acts freely and linearly on $\s^{4k+1}$ only if $\pi \cong \Z_{2^e} \times \Gamma$ for some $e \geq 0$ and some group $\Gamma=\Gamma(a,b,c) \in \mathcal C_d$ such that $a$, $b$, and $d$ are odd, and such that every prime divisor of $d$ also divides $b/d$.

Conversely, if $\pi \cong \Z_{2^e} \times \Gamma$ for some odd-order group $\Gamma = \Gamma(a,b,c) \in \mathcal C_d$ where $d$ is an odd number all of whose divisors divide $b/d$, then $\pi$ acts freely and linearly on $\s^{2vd-1}$ for all odd $v \geq 1$.
\end{theorem}

\begin{proof}
The first statement follows from the Davis--Weinberger factorization together with Vincent's classification of spherical space form groups in dimensions $4k+1$ (see Wolf \cite[Theorems 5.4.1 and 5.6.1]{Wolf}). Alternatively, we can provide an argument based more directly on the classification of spherical space forms, and we do this here. Applying Vincent's theorem right away, we conclude that $\pi \cong \Gamma(a,b',c) \in \mathcal C_d$ for some admissible triple $(a,b',c)$ where $b'$ is not necessarily odd. The representations of $\Gamma(a,b',c)$ inducing free actions on $\s^{4k+1}$ have degree divisible by $2d$ (see \cite[Theorem 5.5.10]{Wolf}). Consequently, $2d$ divides $4k+2$ and hence $d$ is odd. Write $b' = 2^e b$ where $b$ is odd, and note that $d$ divides $b$. From the relations defining $\Gamma(a,b',c)$, it follows that $\alpha$ commutes with $\beta^d$ and hence with $\beta^{b}$. It follows that the subgroup generated by $\beta^{b}$ is isomorphic to $\Z_{2^e}$ and central in $\pi$, and moreover that $\pi$ factors as a direct product $\Z_{2^e} \times \Gamma\of{a,b,c^{2^e}}$, as claimed. Since $d$ is odd, the property that every prime divisor of $d$ also divides $b'/d$ carries over from $b'$ to $b$, so the proof of the first statement is complete.

For the converse, fix such a group $\pi \cong \Z_{2^e} \times \Gamma(a,b,c)$ where $\Gamma(a,b,c) \in \mathcal C_d$ and $d$ is odd. Note that both $c$ and $c^{2^e}$ generate the same subgroup in $\Z_a^\times$, so we may write $c$ as $\tilde c^{2^e}$ for some $\tilde c \in \Z_a^\times$. By the same isomorphism from the previous paragraph, we have that
	\[ \pi \cong \Z_{2^e} \times \Gamma\of{a,b,\tilde c^{2^e}} \cong \Gamma\of{a,2^eb,\tilde c}.\]
Using Vincent's classification again (specifically \cite[Theorem 5.5.10]{Wolf}), we conclude that $\pi$ acts freely and linearly on $\s^{2d-1}$.
\end{proof}

Regarding the classification of free, smooth, but not necessarily linear, actions by finite groups on the standard sphere, we have the following realization theorem due to Madsen, Thomas, and Wall (see \cite[Theorem 0.5]{MadsenThomasWall76} and \cite{MadsenThomasWall83}).

\begin{theorem}[Madsen--Thomas--Wall]\label{thm:MadsenThomasWall}
If $\pi$ is a finite group such that every abelian subgroup is cyclic and every involution is central, then $\pi$ acts freely and smoothly on a standard sphere. If, moreover, $\pi \cong \Z_{2^e} \times \Gamma$ for some odd-order $\Gamma \in \mathcal{C}_d$ with $d \geq 1$, then $\pi$ acts freely and smoothly on $\s^{2vd-1}$ for all $v \geq 1$.
\end{theorem}

The first claim is taken directly from \cite[Theorem 0.5]{MadsenThomasWall76}. For the second claim, we cite \cite{MadsenThomasWall83}, which shows that $\pi$ acts freely and smoothly on a standard sphere of dimension $2e(\pi) - 1$, where $e(\pi)$ is the Artin--Lam induction exponent. For our purposes, we only need to note that groups $\pi \cong \Z_{2^e} \times \Gamma$ with $\Gamma \in \mathcal C_d$ are $(2d)$--periodic (see Lemma \ref{lem:GroupCohomology}) and that $e(\pi) = d$ for such groups, which are known as Type I groups in the literature (see \cite{MadsenThomasWall83}).

\smallskip

We require one more collection of facts having to do with finite groups with periodic cohomology. The notion of periodic cohomology in the context of groups is the following: $H^*(\Gamma;\Z)$ is $(2d)$--periodic if $H^i(\Gamma;\Z) \cong H^{i+2d}(\Gamma;\Z)$ for all $i > 0$ (see, for example, Adem and Milgram \cite[Definition 6.1]{AdemMilgram04}). It is well known that a finite group having no subgroup of the form $\Z_p \times \Z_p$ has periodic cohomology, and so this class includes any group of the form $\pi \cong \Z_{2^e} \times \Gamma$ where $\Gamma \in \mathcal C_d$ for some $d \geq 1$.

\begin{lemma}[Results on finite groups with periodic cohomology]\label{lem:GroupCohomology}
Let $\Gamma$ be any finite group. 
	\begin{enumerate}
	\item If $\Gamma$ acts freely on $\s^n$, then $H^*(\Gamma;\Z)$ is $(n+1)$--periodic.
	\item If $\Gamma \in \mathcal C_d$, then $H^*(\Gamma;\Z)$ is $(2d)$--periodic.
	\item If $H^*(\Gamma;\Z)$ is periodic with periods $d_1$ and $d_2$, then it is $\gcd(d_1,d_2)$--periodic.
	\item If $H^*(\Gamma;\Z)$ is $2$--periodic, then $\Gamma$ is cyclic.
	\end{enumerate}
\end{lemma}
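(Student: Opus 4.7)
The proof of Lemma \ref{lem:GroupCohomology} splits into four largely independent parts, all essentially classical. I would proceed as follows.

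For Part (1), I would use the Borel fibration $\s^n \to \s^n/\Gamma \to B\Gamma$ and analyze its Serre spectral sequence, in which only the rows $q = 0$ and $q = n$ are nonzero. Since the total space $\s^n/\Gamma$ is a closed $n$-manifold, all cohomology in total degrees exceeding $n$ must vanish, forcing the transgression $d_{n+1}: H^p(\Gamma; H^n(\s^n;\Z)) \to H^{p+n+1}(\Gamma;\Z)$ to be an isomorphism for every $p > 0$. When the action preserves orientation---automatic for odd $n$, which is the regime the paper needs---this yields $(n+1)$-periodicity directly. For Part (2), I would generalize Example \ref{exa:SCC} by replacing the prime $p$ with the integer $d$: define an orthogonal representation $\Gamma(a,b,c) \to \Or(2d)$ by the same block formulas,
\[\al \mapsto \diag\of{R(1/a), R(c/a), \ldots, R(c^{d-1}/a)}, \qquad \be \mapsto \twobytwo{0}{\id}{R(1/b)}{0}.\]
Wolf's Theorem 5.5.10, applied with parameters $(m,n,r,d,n') = (a,b,c,d,d)$, establishes that this representation is fixed-point-free, hence induces an orientation-preserving free action on $\s^{2d-1}$. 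Part (1) then delivers the $(2d)$-periodicity.

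For Part (3), I would pass to Tate cohomology $\hat H^*(\Gamma;\Z)$, which is a $\Z$-graded ring in which a positive-degree class induces ordinary-cohomology periodicity if and only if it is a unit. Given units $u_i \in \hat H^{d_i}$ for $i = 1, 2$, the quotient $u_1 u_2^{-1}$ (taking without loss of generality $d_1 \geq d_2$) is a unit in degree $d_1 - d_2$, and iterating the Euclidean algorithm on $(d_1, d_2)$ produces a unit in degree $\gcd(d_1, d_2)$, yielding the claimed periodicity.

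For Part (4), I would combine the standard identifications $\hat H^0(\Gamma;\Z) \cong \Z/|\Gamma|$ (valid for any finite $\Gamma$) and $\hat H^2(\Gamma;\Z) \cong H^2(\Gamma;\Z) \cong \Gamma^{\mathrm{ab}}$ (the latter via universal coefficients, since both $H_1(\Gamma;\Z)$ and $H_2(\Gamma;\Z)$ are finite). Two-periodicity then identifies these groups, giving an isomorphism $\Gamma^{\mathrm{ab}} \cong \Z/|\Gamma|$. In particular $|\Gamma^{\mathrm{ab}}| = |\Gamma|$, so $[\Gamma,\Gamma] = 1$ and $\Gamma = \Gamma^{\mathrm{ab}}$ is cyclic. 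The main piece of bookkeeping is keeping the orientation data straight in Part (1); the remaining parts are purely algebraic and follow in a few lines each.
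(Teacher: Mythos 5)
Parts (1), (3), and (4) of your proposal are correct and close in spirit to what the paper does. For (1), the paper simply cites Adem--Milgram, and your spectral-sequence argument is the standard proof of that cited result; your note that orientation-preservation is automatic for odd $n$ (the only case the paper uses) is the right caveat. For (3), the paper calls the claim ``immediate from the definition,'' meaning the elementary Euclidean-algorithm argument on degrees; your Tate-unit version is the same idea with more machinery. For (4), your chain $\hat H^0 \cong \hat H^2 \cong \Gamma^{\mathrm{ab}}$ is essentially the paper's chain $\Gamma^{\mathrm{ab}} \cong H_1 \cong H^2 \cong \Z/|\Gamma|$ with the last isomorphism (the paper cites Davis--Milgram for it) unwound into Tate language.

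Part (2), however, has a genuine gap: the strategy of producing a free linear action on $\s^{2d-1}$ and then invoking Part (1) cannot work, because groups in $\mathcal C_d$ need not act freely on \emph{any} sphere. A concrete counterexample is $\Gamma(7,3,2)$, the nonabelian group of order $21$: the triple $(7,3,2)$ is admissible and $c=2$ has order $d=3$ in $\Z_7^\times$, so $\Gamma(7,3,2)\in\mathcal C_3$, yet this group violates the $pq$-condition (the whole group has order $3\cdot 7$ and is noncyclic) and therefore acts freely on no sphere. Your representation also fails at the matrix level: for the assignment of $\be$ to be a homomorphism one needs the twist to be $R(d/b)$ rather than $R(1/b)$ (so that $\be^d = \diag(R(d/b),\ldots,R(d/b))$ has order $b/d$ and $\be$ has order $b$), and even with that correction, when $b=d$ the twist is the identity and $\be$ is a bare block-permutation, which fixes every vector on the diagonal $\{(v,\ldots,v)\}$. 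The special choice $b=p^2$ in Example \ref{exa:SCC} is exactly what makes Wolf's Theorem 5.5.10 applicable (one needs $n=n'd$ in his notation) and what rules out such fixed vectors; it cannot be dropped. The paper instead cites Davis--Milgram \cite[p.~229]{DavisMilgram85}, where the $2d$-periodicity of $H^*(\Gamma;\Z)$ for $\Gamma\in\mathcal C_d$ is established by a direct algebraic computation from the metacyclic structure (via the Lyndon--Hochschild--Serre spectral sequence of $1\to\langle\al\rangle\to\Gamma\to\langle\be\rangle\to 1$), not by exhibiting a sphere on which $\Gamma$ acts freely.
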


\begin{proof}
The first statement is Lemma 6.2 in Adem and Milgram \cite{AdemMilgram04}. The second is proved in Davis and Milgram \cite[p.~229]{DavisMilgram85}. The third statement is immediate from the definition. For the last statement, note that
	\[\Gamma/[\Gamma,\Gamma] \cong H_1(\Gamma;\Z) \cong H^2(\Gamma;\Z) \cong \Z/|\Gamma|,\]
where the last equality follows from \cite[Theorem 1.2]{DavisMilgram85}.
\end{proof}

Finally we come to the following class of examples showing that the conjecture in the introduction is sharp with respect to the symmetry assumption.

\begin{example}\label{exa:SCC}
Let $q$ be a prime dividing $\frac{n+1}{2}$. There exists a free, linear action by some $\Gamma \in \mathcal C_q$ on the round $n$--sphere that commutes with a linear torus action of rank $\frac{n+1}{2q}$.
\end{example}

\begin{proof}
Fix $q$ and $n$ such that $n + 1 \equiv 0 \bmod{2q}$. Choose positive integers $a$, $b$, and $c$ such that the following hold:
	\begin{itemize}
	\item $a$ is a prime congruent to one modulo $q$ (exists by Dirichlet's theorem),
	\item $b = q^2$, and
	\item $c$ is less than $a$ and has order $q$ in $\Z_a^\times$ (exists by Cauchy's theorem).
	\end{itemize}
It is straightforward to see that $(a,b,c)$ is an admissible triple and that $\Gamma=\Gamma(a,b,c) \in \mathcal C_q$. Moreover, it follows by Wolf's theorem (Theorem \ref{thm:Wolf}) that $\Gamma$ acts freely on $\s^{2q-1}$ and hence on $\s^n$ by taking an appropriate sum of representations.

It suffices to construct a free action on $\s^n$ that commutes with an action by a torus of dimension $\frac{n+1}{2q}$. To do this, consider the representation $\Gamma \to \Or(2q)$ given by
	\begin{eqnarray*}
	\al &\mapsto& \diag\of{R(1/a), R(c/a), R(c^2/a),\ldots,R(c^{q-1}/a)},\\
	\be &\mapsto& \of{\begin{array}{cc} 0 & \id \\ R(1/q) & 0\end{array}},
	\end{eqnarray*}
where $\al$ and $\be$ are as in Definition \ref{def:Cd} and $R(\theta)$ is the two-by-two rotation matrix with angle $2\pi\theta$. By taking $(m,n,r,d,n') = (a,b,c,q,q)$, it follows from \cite[Theorem 5.5.10]{Wolf} that this representation is fixed-point-free. Consequently, the sum of $\frac{n+1}{2q}$ of these representations induces a free action of $\Gamma$ on $\s^n$. Moreover, this action commutes with an action of $T^r$ on $\s^n$, where $r = \frac{n+1}{2q}$ and the action is induced by the representation $T^r \to O(n+1)$ given by
	\[(\theta_1,\ldots,\theta_r)
		\mapsto \diag\of{R(\theta_1),\ldots,R(\theta_1),\ldots,R(\theta_r),\ldots,R(\theta_r)}.\]

These examples show that the conjecture in the introduction is sharp with respect to the symmetry assumption. Moreover, note that the subaction by the diagonal $\sone \subseteq T^r$ is the (free) Hopf action.
\end{proof}

\bigskip\section{Free actions on positively curved rational spheres with circle symmetry}\label{sec:CircleObstruction}\bigskip

We now consider free actions on positively curved rational homology spheres with circle symmetry. The following obstruction is the main new tool we use in our analysis:

\begin{proposition}\label{pro:CircleObstruction}
Let $M^n$ be a closed, simply connected, odd-dimensional rational homology sphere with positive sectional curvature. If $\sone$ acts effectively by isometries, if $\Gamma$ is a group of odd order acting freely by isometries, and if these actions commute, then the following hold:
	\begin{enumerate}
	\item If $\Gamma \supseteq \Z_p \times \Z_p$ for some prime $p$, then $2p$ divides $\cod\of{M^{\sone}}$.
	\item If $\Gamma \in \mathcal C_d$ for some $d\geq 1$, then $2d$ divides $\cod\of{M^{\sone}}$. 
	\end{enumerate}
\end{proposition}

Here $\cod\of{M^{\sone}}$ denotes the codimension of the fixed-point set $M^{\sone}$. By convention, we set $\cod\of{M^{\sone}} = n + 1$ if the fixed-point set is empty. Also, $\mathcal C_d$ is as in Definition \ref{def:Cd}.

Note that this lemma applies to a closed, odd-dimensional, positively curved manifold whose universal cover is a rational homology sphere. Indeed, if $M$ admits an effective, isometric circle action, then its universal cover does as well. Moreover, this action commutes with the free $\pi_1(M)$--action. Combining this observation with Proposition \ref{pro:CircleObstruction} and the estimate $\cod\of{M^{\sone}} \leq n + 1$, we deduce the following consequence.

\begin{corollary}\label{cor:RongRationalSphere}
Let $M^n$ be a closed, positively curved Riemannian manifold whose universal cover is a rational homology sphere. If $M$ admits a non-trivial, isometric circle action, then $\pi_1(M) \supseteq \Z_p \times \Z_p$ for some prime $p$ only if $p \leq \frac{n+1}{2}$.
\end{corollary}

Restricting to dimension seven, Proposition \ref{pro:CircleObstruction} implies the following.

\begin{corollary}\label{cor:dim7}
Let $M^7$ be a closed, positively curved Riemannian manifold whose universal cover is a rational homology sphere. If a circle acts effectively by isometries on $M$, then every subgroup $\Gamma \subseteq \pi_1(M)$ of odd order is cyclic.
\end{corollary}

\begin{proof}
Denote the circle by $\sone$. The fixed-point set $M^{\sone}$ is another (connected) rational homology sphere of codimension $k \in \{2,4,6,8\}$ (see Smith \cite{Smith38} or Bredon \cite[III.10.2 and III.10.10]{Bredon72}). If $\Gamma \supseteq\Z_p\times\Z_p$ for some prime $p > 2$, then $2p$ divides $k$. This can only occur if $k = 6$, in which case $\Gamma$ acts on the circle $M^{\sone}$, a contradiction.

Since $\Gamma$ has odd order, Burnside's classification implies that $\Gamma \in \mathcal C_d$ for some odd $d\geq 1$. By Proposition \ref{pro:CircleObstruction} again, $d \leq 3$ with equality only if $\Gamma$ acts on the circle. Since groups in $\mathcal C_3$ are not cyclic, we conclude that $d = 1$ and hence that $\Gamma$ is cyclic.
\end{proof}

We now proceed to the proof of Proposition \ref{pro:CircleObstruction}. The proof is a modification of an argument in Sun and Wang (see \cite[Lemma 2.3]{SunWang09}, cf. Rong \cite{Rong05} and Rong--Wang \cite[Proposition 3.4]{RongWang05} and \cite{RongWang08}).

\begin{proof}[Proof of Proposition \ref{pro:CircleObstruction}]
Let $\bar M = M/\sone$, $N = M^{\sone}$, and $\bar N = N/\sone \subseteq \bar M$. As in the previous proof, since $M$ is a rational homology sphere, $N$ is another rational homology sphere of even codimension $k \leq n+1$, where $k = n+1$ corresponds to the case where $N$ is empty. In particular, $N$ is connected, so $\Gamma$ acts on $N$ since the actions of $\Gamma$ and $\sone$ commute.

Using cohomology with rational coefficients, the Smith--Gysin sequence (see Bredon \cite[III.10.5]{Bredon72})
	\[\cdots
		\longrightarrow H^i(M) 
		\longrightarrow H^{i-1}(\bar{M}, \bar N) \oplus H^i(\bar N)
		\longrightarrow H^{i+1}(\bar M,\bar N)
		\longrightarrow H^{i+1}(M)
		\longrightarrow \cdots\]
implies that $H^i(\bar M, \bar N; \Q)$ is $\Q$ for even indices $n-k+1 \leq i \leq n-1$ and $0$ for all other $i$. In particular, we have
	\[\chi(\bar M, \bar N) = \sum (-1)^i \dim H^i(\bar M, \bar N;\Q) = \frac{k}{2}.\]

We now consider the action of $\Gamma$ on $\bar M$, which is well defined since the actions of $\sone$ and $\Gamma$ commute. In both conclusions of the lemma, $\Gamma$ has a subgroup generated by some $\al$ and $\be$ subject to relations of the form $\al^a = 1$, $\be^b = 1$, and $\be\al\be^{-1} = \al^c$. In the first case $(a,b,c) = (p,p,1)$, and in the second case $(a,b,c)$ is any admissible triple (as in Definition \ref{def:Cd}) such that the order of $c \in \Z_a^\times$ is $d$.

Since $\al$ acts as a homeomorphism on $\bar M$, the induced map of $\al^2$ on $H^*(\bar M, \bar N;\Q)$ is the identity. It therefore follows by the Lefschetz fixed point formula that
	\[\chi(\bar M, \bar N) = \mathrm{Lef}(\al^2; \bar M, \bar N) = \chi(\bar M^{\al^2}, \bar N^{\al^2}).\]
Now $\al$ has odd order, so it and $\al^2$ generate the same subgroup. In particular, the fixed point sets of $\al$ and $\al^2$ coincide. Moreover, the fixed point set of the induced action on $\bar N$ is empty since $\al$ acts freely on $N$. Putting all of this together, we conclude
	\[\chi(\bar M^{\al}) = \frac{k}{2}.\]

We now bring $\be$ into the picture. Since $\be$ normalizes the subgroups $\langle \al \rangle$ and $\sone$, it acts on $\bar M^{\al}$. Of course, $\bar M^{\al}$ might have many components. We claim the following: If, for some $i$, $\be^i$ acts on some component of $\bar M^\al$, then $\be^i$ and $\al$ generate a cyclic subgroup. Given this claim for a moment, we conclude the lemma as follows:
	\begin{enumerate}
	\item In the first case of the lemma, $\be^i$ and $\al$ generate $\Z_p \times \Z_p$ for all $0 < i < p$, so $\be^i$ cannot act on any component of $\bar M^\al$ if $0 < i < p$. Therefore, $\be$ partitions the components of $\bar M^\al$ into groups of $p$ such that each group represents a unique homeomorphism type. In particular, $\chi(\bar M^\al) \equiv 0 \bmod{p}$, so $p$ divides $k/2$.
	
	\item In the second case, if $\be^i$ acts on some component of $\bar M^\al$, we would conclude from the claim that $\be^i$ and $\al$ commute. The relations defining $\Gamma(a,b,c)$ imply that this only occurs if $d$ divides $i$. As in the previous case, we partition $\bar M^\al$ into groups according to the orbits of the $\langle\be\rangle$-action on the set of components of $\bar M^\al$. In this case, we conclude that $\chi(\bar M^\al) \equiv 0 \bmod{d}$, so $d$ divides $k/2$.
	\end{enumerate}
It suffices now to prove the claim. Indeed, suppose that $\beta^i$ acts on some component $A \subseteq \bar{M}^\al$. Let $P:M \to \bar M$ be the projection. It follows exactly as in the proof of Lemma 2.3 in \cite{SunWang09} that, using a theorem of Rong (see \cite{Rong05}), $\beta^i$ acts on some circle orbit of $P^{-1}(A)$. But $\al$ acts on every circle orbit of $P^{-1}(A)$, hence the subgroup generated by $\be^i$ and $\al$ acts freely on a circle and is therefore cyclic.
\end{proof}

\bigskip\section{Free actions on positively curved rational  spheres with torus symmetry}\label{sec:Room}\bigskip

In this section, we apply the obstructions (Propositions \ref{pro:MorePeriodicity}, \ref{pro:DavisWeinberger}, and \ref{pro:CircleObstruction}) from the previous three sections to derive a further obstruction for positively curved rational homology spheres with torus symmetry. The following proposition essentially implies most of the theorems stated in the introduction in the special case where the universal cover of $M$ is a rational sphere. In particular, it together with Propositions \ref{pro:MorePeriodicity} and \ref{pro:DavisWeinberger} imply Theorem \ref{thm:RongWang5mod6}, and this proof is included in this section.

\begin{proposition}\label{pro:Room}
Let $M^n$ be a simply connected, rational sphere that admits a Riemannian metric with positive sectional curvature and $T^r$ symmetry. If $M$ admits a free action by a finite, odd-order group $\Gamma$ that commutes with the action by $T^r$, then the following hold:
	\begin{enumerate}
	\item If $\Gamma \supseteq \Z_p \times \Z_p$ for some prime $p$, then $r \leq \frac{n+1}{2p}$ and $r\leq \frac{n+1}{4p} + \frac{p}{2}$. 
	\item If $\Gamma \in \mathcal C_d$ for some $d \geq 1$, then $r \leq \frac{n+1}{2d}$.
	\end{enumerate}
Moreover, if the torus action has no fixed points, then $n+1$ is divisible by $2p$ in the first case and by $2d$ in the second.
\end{proposition}

The proof of Proposition \ref{pro:Room} uses Propositions \ref{pro:MorePeriodicity} and \ref{pro:CircleObstruction}. Before getting to it, we discuss two corollaries. The first improves the result (Theorem \ref{thm:FRWRW}) of Rong and Wang mentioned in the introduction.

\begin{corollary}[Theorem \ref{thm:RongWang5mod6}]\label{cor:RongWang5mod6}
If $M^n$ is a closed Riemannian manifold with positive sectional curvature and $T^r$ symmetry such that $n \geq 25$ and $r \geq \frac{n+1}{6}$, then one of the following occurs:
	\begin{itemize}
	\item $\pi_1(M)$ acts freely and linearly on $\s^3$.
	\item $r = \frac{n+1}{6}$, and $\pi_1(M)$ acts freely both by diffeomorphisms on a standard $\s^5$ and by isometries on a simply connected, positively curved mod $2$ and mod $3$ homology $5$--sphere. Moreover, the latter action commutes with an isometric circle action.
	\end{itemize}
\end{corollary}

As mentioned in Rong and Wang's paper, it is possible that the last conclusion could be improved by better understanding fundamental groups of positively curved $5$--manifolds with circle symmetry (see also Fang--Rong \cite{FangRong09}). The proof is a combination of Proposition \ref{pro:Room} and the results of Section \ref{sec:DavisWeinbergerBurnside}.

\begin{proof}[Proof of Corollary \ref{cor:RongWang5mod6} (Theorem \ref{thm:RongWang5mod6})]
We lift the positively curved metric and torus symmetry to $\tilde M$. The fundamental group acts freely by deck transformations on the universal cover, and this action commutes with the torus action on $\tilde M$.

Assume $\pi_1(M)$ is not a three-dimensional spherical space form. The proof of Theorem 1.1 in \cite{RongWang08} implies that $\pi_1(M)$ acts freely by isometries on a positively curved, simply connected, $5$-dimensional rational homology sphere, and that this action commutes with an isometric circle action. Moreover, this only arises in the case where $\tilde M$ contains a pair of transversely intersection, totally geodesic submanifolds of codimension six (see \cite[Remark 4.1]{RongWang08} regarding Case 3.(a) of the proof of Statement (2.3.1) of Lemma 2.3).

The first of these conclusions implies that $\pi_1(M)$ admits a Davis--Weinberger factorization $\pi_1(M) \cong \Z_{2^e} \times \Gamma$ for some $e \geq 0$ and some odd-order group $\Gamma$ (see Proposition \ref{pro:DavisWeinberger}). If $\Gamma$ is trivial, then $\pi_1(M)$ is cyclic and hence is a three-dimensional space form group. We may assume therefore that $\Gamma$ is not trivial. By Synge's theorem, $n$ is odd. 

Proposition \ref{pro:MorePeriodicity} now implies that $\tilde M$ is a mod $3$ homology sphere and hence that $\Gamma \not\supseteq \Z_3 \times \Z_3$. Note also that $\tilde M$ is mod $2$ homology sphere by earlier work of the author (specifically, \cite[Proposition 1.3]{Kennard13}). This completes the part of the claim regarding a free, isometric action of $\pi_1(M)$.

Next, we have that $\tilde M$ is a rational homology sphere, so Proposition \ref{pro:Room} applies. Since $r \geq \frac{n+1}{6}$, there is no subgroup $\Z_p \times \Z_p \subseteq \Gamma$ with $p > 3$. Combining this with the previous paragraph, we conclude that every abelian subgroup of $\Gamma$ is cyclic. By Burnside's classification (Theorem \ref{thm:Burnside}), $\Gamma \cong \Gamma(a,b,c) \in \mathcal C_d$ for some $d \geq 1$. But now Proposition \ref{pro:Room} applies again, giving the estimate $d \leq \frac{n+1}{2r} \leq 3$. Since we have assumed $\pi_1(M)$ is not cyclic, $d \neq 1$. Since $d$ is odd, we have $d = 3$ and $r = \frac{n+1}{6}$. It now follows from the realization theorem of Madsen, Thomas, and Wall (Theorem \ref{thm:MadsenThomasWall}) that $\pi_1(M)$ acts freely by diffeomorphisms on $\s^5$.
\end{proof}

The second result is an interpretation of Proposition \ref{pro:Room} in dimensions $9$ and $13$. Here, an easy argument actually produces a sharp conclusion. In fact, it establishes the conjecture from the introduction in the smallest possible dimension, $2q-1$, when $q \in \{5, 7\}$, under the additional assumption that the universal cover of the manifold is a rational sphere. The proof also foreshadows the proof of Proposition \ref{pro:Room} in higher dimensions.

\begin{corollary}\label{cor:dim9and13}
Let $M^n$ be a closed, positively curved Riemannian manifold whose universal cover is a rational sphere. If $n \in \{9,13\}$ and $M$ admits $T^2$ symmetry, then $\pi_1(M)$ is cyclic.
\end{corollary}

\begin{proof}
As in the proof of the previous corollary, we lift to the universal cover $\tilde M$. By Proposition \ref{pro:DavisWeinberger}, $\pi_1(M)$ factors as $\Z_{2^e} \times \Gamma$ for some odd-order group $\Gamma$.

First suppose that $\Gamma \supseteq \Z_p \times \Z_p$ for some prime $p$. Consider any sequence $\tilde M \supseteq \tilde M^{T^1} \supseteq \tilde M^{T^2}$ where both inclusions are strict. Each fixed-point set is another rational sphere of odd dimension, hence Proposition \ref{pro:CircleObstruction} applies to both inclusions. In particular, each inclusion has codimension at least $2p$. Since $\cod\of{\tilde M^{T^2}} \leq \dim(\tilde M) + 1 \leq 14$, this implies $2p + 2p \leq 14$, or $p = 3$. Moreover, if $p = 3$, then both codimensions equal six, $n = 13$, and hence $\tilde M^{T^2}$ is a circle. Since the action by $\Gamma$ restricts to $\tilde M^{T^2}$, this is a contradiction.

Suppose therefore that every abelian subgroup of $\Gamma$ is cyclic. By Burnside's classification (Theorem \ref{thm:Burnside}), $\Gamma \in \mathcal C_d$ for some (odd) $d\geq 1$. Applying the above argument once more implies $d = 1$ and hence that $\Gamma$ and $\pi_1(M)$ are cyclic.
\end{proof}

We spend the rest of this section on the proof of Proposition \ref{pro:Room}. First, we note that the last claim follows immediately from Proposition \ref{pro:CircleObstruction}. Indeed, when the torus has no fixed point, there is a circle inside whose fixed-point set is empty and hence has codimension $n + 1$, so Proposition \ref{pro:CircleObstruction} implies the result.

Second, by Berger's theorem (Theorem \ref{thm:Berger}), either $T^r$ or a codimension-one subtorus $T^{r-1} \subseteq T^r$ fixes some point $x \in M$. The subtorus acts on the normal sphere at that fixed point, and an inductive argument shows that there exists a sequence
	\[T^0 \subseteq T^1 \subseteq \cdots \subseteq T^{r-1}\]
such that the inclusion $M^{T^{i}} \subseteq M^{T^{i-1}}$ has positive codimension for all $1 \leq i \leq r - 1 $. Since $\cod\of{M^{T}} = n+1$ when $M^{T}$ is empty, we have that the inclusion $M^{T^r} \subseteq M^{T^{r-1}}$ has positive codimension as well.

Third, the fact that $M$ is a rational homology sphere implies that each $M^{T^i}$ is another (possibly empty) rational homology sphere of odd dimension. Indeed this follows by Smith's theorem as in the proof of Proposition \ref{pro:CircleObstruction}. In particular, each $M^{T^i}$ is connected, and the action by $\Gamma$ restricts to each $M^{T^i}$. Proposition \ref{pro:CircleObstruction} therefore implies that inclusion $M^{T^i} \supseteq M^{T^{i+1}}$ has codimension at least $2p$ (resp. $2d$) in Case 1 (resp. Case 2). The codimension of $M^{T^r}$ is therefore at least $2pr$ (resp. $2dr$). On the other hand, $\cod\of{M^{T^r}} \leq n-1$ if it $M^{T^r}$ is nonempty, and $\cod\of{M^{T^r}} = n + 1$ by convention otherwise. This concludes the proof that $2pr \leq n+1$ in Case 1 (resp. $2dr \leq n+1$ in Case 2).

It suffices to prove that $r \leq \frac{n+1}{4p} + \frac{p}{2}$ in Case 1. For this, we refine the above argument using Proposition \ref{pro:MorePeriodicity}. Set $k_0 = 4p$. Let $j \geq 0$ be maximal such that there exists a sequence
	\[T^0 \subseteq T^1 \subseteq \cdots \subseteq T^r\]
of subtori such that the following hold: $T^{r-1}$ fixes a point $x\in M$, the induced action of $T^r$ on submanifold $M_i = M^{T^i}$ has $i$-dimensional kernel for all $i$, and $\ave(k_0,k_1,\ldots,k_j) \geq 4p$, where $\ave$ denotes the average and $k_i = \cod\of{M_i\subseteq M_{i-1}}$.

Note that Proposition \ref{pro:CircleObstruction} implies that $k_i = 2p$ or $k_i \geq 4p$ for every $i$. Using this fact together with the maximality of $j$, we claim the following:
\begin{lemma}\label{lem:kj} The following hold:
	\begin{itemize}
	\item $k_0 + k_1 + \ldots + k_j = 4p(j+1)$.
	\item If $r - j \geq 3$, then there exists $T^j \subseteq H \subseteq T^{j+2}$ such that $M^{T^{j+1}}$ and $M^H$ have codimension $2p$ in $M^{T^j}$ and intersect transversely with intersection $M^{T^{j+2}}$.
	\end{itemize}
\end{lemma}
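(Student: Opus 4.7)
My plan is to handle the two bullets in turn, with the first following from the maximality of $j$ together with the divisibility enforced by Lemma \ref{lem:SunWang}, and the second requiring an analysis of the $T^{j+2}/T^j$-weight structure on the normal bundle of $M^{T^{j+2}}$.

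For the first bullet, I would first observe that each $k_i$ is a positive multiple of $2p$. Indeed, each $M^{T^{i-1}}$ is a positively curved rational homology sphere by iterated Smith theory, and $T^i/T^{i-1}$ acts on it by a circle commuting with $\Gamma$, so Lemma \ref{lem:SunWang} applies. The hypothesis $\ave(k_0,\ldots,k_j) \geq 4p$ yields $\sum_{i=0}^j k_i \geq 4p(j+1)$. Assuming $j < r$ (the marginal case being incidental to the applications), the maximality of $j$ applied to the prefix of length $j+2$ in the same sequence forces $\sum_{i=0}^{j+1} k_i < 4p(j+2)$; subtracting $k_{j+1} \geq 2p$ gives $\sum_{i=0}^j k_i < 4p(j+1) + 2p$, and divisibility by $2p$ forces the equality $\sum_{i=0}^j k_i = 4p(j+1)$.

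For the second bullet, the hypothesis $r - j \geq 3$ ensures $T^{j+2} \subseteq T^{r-1}$ fixes $x$, so $M^{T^{j+2}}$ is nonempty. Working in $N := M^{T^j}$ (another positively curved rational sphere with an effective commuting $T^2$-action, where $T^2 := T^{j+2}/T^j$), I would decompose the normal bundle of $M^{T^{j+2}}$ into its $T^2$-weight subbundles and group them as $V_1 \oplus V_2$, where $V_1$ has real dimension $k_{j+1} = 2p$ and carries weights $(m,n)$ with $m \neq 0$, and $V_2$ has real dimension $k_{j+2}$ and carries weights of the form $(0,n)$. Applying Lemma \ref{lem:SunWang} to arbitrary subcircles $K \subseteq T^2$ acting on $N$ forces each $K$-fixed part of this normal to have dimension in $2p\mathbb{Z}$. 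Since the circle $K_l$ perpendicular to each weight direction of $V_1$ fixes precisely the corresponding $V_1$-piece (as $V_1$- and $V_2$-weights are never proportional), this forces each such piece to have dimension a multiple of $2p$; since $\dim V_1 = 2p$, $V_1$ consists of a single weight direction.

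I would then show $k_{j+2} = 2p$: if instead $k_{j+2} \geq 4p$, the circle $K_1 \subseteq T^2$ perpendicular to the $V_1$-weight direction satisfies $\cod(M^{K_1} \subseteq N) = k_{j+2} \geq 4p$, and replacing $T^{j+1}$ by the subtorus $K_1^{(j+1)} \subseteq T^{j+2}$ lifting $K_1$ yields a sequence whose first $j+2$ codimensions sum to $4p(j+2)$, contradicting maximality of $j$. With $k_{j+2} = 2p$, the desired $H$ is exactly $K_1^{(j+1)}$: at each $p \in M^{T^{j+2}}$, $T_p M^H = W \oplus V_1$ and $T_p M^{T^{j+1}} = W \oplus V_2$ with $W = T_p M^{T^{j+2}}$, so both submanifolds have codimension $2p$, their tangent sum is all of $T_p N$, and their tangent intersection at $p$ is exactly $W$, giving transverse intersection $M^{T^{j+2}}$. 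The main technical point I anticipate is verifying that the sequence-modification above preserves the kernel condition requiring $T^r$ to act on each $M^{T^i}$ with exactly $i$-dimensional kernel; I would address this by observing that, in the local weight picture, $K_1^{(j+1)}$ is already the full $T^{j+2}$-stabilizer of $M^{K_1^{(j+1)}}$, and by passing to its full stabilizer in $T^r$ if needed to restore the condition without disturbing the average.
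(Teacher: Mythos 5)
Your proof is correct and follows essentially the same route as the paper: the first bullet is the same divisibility-and-maximality argument, and the second bullet likewise combines the $T^{j+2}/T^j$-weight decomposition of the normal bundle with Lemma \ref{lem:SunWang} and maximality. You are somewhat more explicit than the paper — you separately establish that $V_1$ carries a single proportionality class of weights and that $k_{j+2}=2p$, whereas the paper simply asserts $\cod(M^H\subseteq M_j)=2p$ and leaves those consequences (which are what actually make the intersection transverse) implicit in the phrase ``since $H$ and $T^{j+1}$ generate $T^{j+2}$.'' The kernel-condition technicality you flag at the end is genuine, but the paper's proof passes over it in exactly the same informal way when it contradicts maximality by replacing $T^{j+1}$ with $H$; your proposed fix (that the lifted circle is already the full $T^{j+2}$-stabilizer, and one passes to the full $T^r$-stabilizer if needed) is the right idea, though it is stated rather than carried out.
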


\begin{proof}
For the first statement, note that $\ave(k_0,\ldots,k_j) \geq 4p$ implies $k_0 + \ldots + k_j \geq 4p(j+1)$. On the other hand, if $k_0 + \ldots + k_j > 4p(j+1)$, then $k_0 + \ldots + k_j \geq 4p(j+1) + 2p$. In particular, $\ave(k_0,\ldots,k_{j+1}) \geq 4p$, a contradiction to the maximality of $j$.

For the second claim, suppose that $r - j \geq 3$. Observe that $k_{j+1} = 2p$, since $k_{j+1} \geq 4p$ would imply $\ave(k_0,\ldots,k_{j+1}) \geq 4p$, another contradiction to maximality. Similarly, $k_{j+2} \leq 4p$.

Next, since $j+2 \leq r - 1$, $T^{j+2}$ has a fixed point $x \in M$. Consider the isotropy representation of $T^{j+2}$ at $x$. There exists an $(j+1)$--dimensional torus $H$ between $T^j$ and $T^{j+2}$ such that $H$ fixes some direction tangent to $M^{T^j}$ and normal to $M^{T^{j+1}}$ at $x$. Note that $M^H$ lies strictly between $M^{T^{j+2}}$ and $M^{T^j}$. By Proposition \ref{pro:CircleObstruction}, both of these inclusions have codimension divisible by $2p$. The maximality of $j$ then implies $\cod\of{M^H \subseteq M_j} = 2p$. Finally, since $H$ and $T^{j+1}$ generate $T^{j+2}$, it follows that $M^{T^{j+2}}$ is the transverse intersection in $M^{T^j}$ of $M^H$ and $M^{T^{j+1}}$.
\end{proof}

We are ready to complete the proof. Let $n_i$ denote the dimension of $M^{T^{i}}$ for all $i$. Since induced $T^r$ action on $M_i$ has $i$-dimensional kernel, $M_i$ admits an isometric, effective $(r-i)$--dimensional torus action that commutes with the action of $\Gamma$.

First, suppose that $r - j \geq p + 1$. By the second part of Lemma \ref{lem:kj}, $M_j$ contains a pair of transversely intersecting, totally geodesic submanifolds of codimension $2p$. Moreover, since $\cod(M_i \supseteq M_{i+1}) \geq 2p$ for all $i \geq j$ and $\dim(M_r) \geq -1$ (with equality corresponding to the case where $M_r = M^{T^r}$ is empty), we have
	\[\dim(M_j) \geq (2p)(r-j) - 1 \geq 2p(p+1) - 1 > \max(19, 2p^2).\]
Proposition \ref{pro:MorePeriodicity} now implies that $M_j$ is a mod $p$ homology sphere and hence that $\Z_p \times \Z_p$ cannot act freely on $M_j$. This is a contradiction.

We may assume therefore that $r - j \leq p$. Here we can show that the codimension of $M_r$ is large since $\cod(M_i \supseteq M_{i+1}) = 4p$ for all $i < j$. Specifically, we have
	\[n+1 \geq \cod(M_r) \geq (4p)j + (2p)(r-j) \geq (4p)(r-p) + (2p)(p) = 4pr - 2p^2.\]
Solving for $r$ proves the required bound.

\bigskip\section{Proof of Theorem \ref{thm:CCSlog}}\label{sec:CCSlog}\bigskip

So far, we have mainly considered simply connected manifolds that admit free actions by finite groups. Clearly the application is to the action of the fundamental group on the universal cover. The main results were the obstructions presented in Propositions \ref{pro:MorePeriodicity}, \ref{pro:CircleObstruction}, and \ref{pro:Room}. From now on, $M$ will denote a (not necessarily simply connected) Riemannian manifold, and $\tilde M$ will denote the universal cover. The purpose of this section is to prove Theorem \ref{thm:CCSlog} from the introduction. The theorem is included in the following:

\begin{theorem}[Theorem \ref{thm:CCSlog}]\label{thm:CCSlogPLUS}
Let $M^n$ be a closed Riemannian manifold with positive sectional curvature and $T^r$ symmetry. If $n \equiv 1 \bmod{4}$ and $r > \log_{4/3}\pfrac{n+3}{6}$, then $\pi_1(M) \cong \Z_{2^e} \times \Gamma$ for some $e\geq 0$ and some odd-order group $\Gamma$. Moreover, the following hold:
	\begin{enumerate}
	\item If $\Gamma \supseteq \Z_p \times \Z_p$ for some prime $p$, then $r \leq \frac{n+1}{2p}$ and $r \leq \frac{n+1}{4p} + \frac{p}{2}$.
	\item If $\Gamma \not\supseteq \Z_p \times \Z_p$ for all primes $p$, then $\Gamma \in \mathcal C_d$ for some positive, odd integer $d \leq \frac{n+1}{2r}$. In particular, $\pi_1(M)$ contains a normal, cyclic subgroup of index $d$ and acts freely and smoothly on $\s^{2d-1}$.
	\end{enumerate}
\end{theorem}

First, we make a few remarks. Using Burnside's classifiation (Theorem \ref{thm:Burnside}), the property that $\Gamma \not\supseteq\Z_p \times \Z_p$ for all primes $p$ implies that $\Gamma \in \mathcal C_d$ for some $d\geq 1$. Since $\Gamma$ has odd order, $d$ is also odd by the definition of $\mathcal C_d$ (see Definition \ref{def:Cd}). Moreover, the properties of the class $\mathcal C_d$ imply the statement about the normal, cyclic subgroup of index $d$ (see Lemma \ref{lem:GroupCohomology}), and the Madsen--Thomas--Wall classification implies the existence of a free action by diffeomorphisms on $\s^{2d-1}$ (see Theorem \ref{thm:MadsenThomasWall}). With these comments in mind, we note that Conclusion (2) is equivalent to the following:
	\begin{enumerate}
	\item[(2')] If $\Gamma \in \mathcal C_d$ for some $d \geq 1$, then $d \leq \frac{n+1}{2r}$.
	\end{enumerate}

We proceed to the proof of Theorem \ref{thm:CCSlogPLUS}. The proof is a fairly straightforward combination of Proposition \ref{pro:Room} (for the case where $\tilde M$ is a rational sphere) and an inductive argument using Proposition \ref{pro:AKlog} (for the case where $\tilde M$ is not a rational sphere). However the argument only starts to work easily in large dimensions, so for simplicity we take a moment prove the following lemma. Note that $r > \log_{4/3}\pfrac{n+3}{6}$ in dimensions up to $97$ implies that $r \geq \sqrt{n-1}$.

\begin{lemma}[Dimensions up to $97$]\label{lem:CCSsqrt}
If $M^n$ is as in Theorem \ref{thm:CCSlogPLUS} except that we assume that $n \leq 97$ and $r \geq \sqrt{n-1}$, then $\pi_1(M) \cong \Z_{2^e} \times \Gamma$ for some $e \geq 0$ and some odd-order group $\Gamma$. Moreover, $\Gamma \not\supseteq \Z_p \times \Z_p$ for all primes $p$, and $\Gamma \in \mathcal C_d$ for some odd $d \leq \min\of{3, \frac{n+1}{2r}}$. In particular, $\pi_1(M)$ acts freely and smoothly on $\s^5$. In fact, $\pi_1(M)$ is cyclic if $n \leq 33$.
\end{lemma}

The last statement will not be required for the rest of the proof, but it follows easily from the proof below. It provides a new result in dimension $33$, as far as the author can tell.

\begin{proof}[Proof of Lemma \ref{lem:CCSsqrt}]
We lift the metric and torus action to the universal cover $\tilde M$, and we consider the free, isometric action of $\pi_1(M)$ on $\tilde M$ that commutes with the torus action.

First suppose that $\tilde M$ is a rational homology sphere. By Proposition \ref{pro:DavisWeinberger}, $\pi_1(M)$ admits a Davis--Weinberger factorization as $\Z_{2^e} \times \Gamma$ for some $e \geq 0$ and some odd-order group $\Gamma$. Consider the subaction by $\Gamma$ on $\tilde M$. Proposition \ref{pro:Room} implies that $\Gamma \supseteq \Z_p \times \Z_p$ for some prime $p$ only if $r \leq \frac{n+1}{2p}$ and $r \leq \frac{n+1}{4p} + \frac{p}{2}$.  Since $n \leq 97$ and $r \geq \sqrt{n-1}$, the first of these inequalities implies $p = 3$ and $n \geq 37$. Given that $p = 3$, the second inequality implies a contradiction. Hence $\Gamma \not\supseteq \Z_p \times \Z_p$ for any prime $p$. As argued above before the statement of Conclusion (2'), we have that $\Gamma \in \mathcal C_d$ for some odd $d \geq 1$ and that $\pi_1(M)$ has a normal, cyclic subgroup of index $d$ and acts freely by diffeomorphisms on $\s^{2d-1}$. Applying Proposition \ref{pro:Room} again, we have that $d \leq \frac{n+1}{2r}$. As with the estimate on $p$ above, we have $d \leq 3$, with equality only if $n \geq 37$. This completes the proof in this case.

Now suppose that $\tilde M$ is not a rational homology sphere. For $n \leq 21$ or $n = 29$, the symmetry assumption implies $r \geq \frac{n+1}{6} + 1$, so $\pi_1(M)$ is cyclic (and hence $\pi_1(M) \cong \Z_{2^e} \times \Gamma$ for some cyclic group $\Gamma \in \mathcal C_1$) by the result of Frank--Rong--Wang (Theorem \ref{thm:WFRW}). Moreover, for $n = 25$, the symmetry assumption implies $r \geq 5$, so $\pi_1(M)$ is cyclic by a theorem of Wang mentioned in the introduction (see \cite[Theorem B]{Wang07}). Assume therefore that $33 \leq n \leq 97$. In this range, an analogue of Proposition \ref{pro:AKlog} holds under the assumption that $s = r - 2 \geq \sqrt{n-1} - 2$. Indeed, as in the proof of Proposition \ref{pro:AKlog}, it suffices to check Inequality \ref{ine:1.1}. This can be checked on a case-by-case basis. Given this analogue, our assumption that $\tilde M$ is not a rational sphere implies the existence of an involution $\iota \in T^r$ and a component $N \subseteq \tilde M^\iota$ such that $\cod(N) \equiv 0 \bmod{4}$, $\frac{n-1}{4} < \cod(N) \leq \frac{n-1}{2}$, and $\dim\ker(T^s|_N) \leq 1$. Since $\tilde M$ is a not a rational sphere, we may assume further that $\dim\ker(T^r|_N) \leq 1$ by Lemma \ref{lem:dk2}. In particular, $\cod(N)$ is divisible by four and at least $\frac{n+3}{4}$, and the rank of torus symmetry of $N$ is at least $r - 1 \geq \sqrt{n-1} - 1$. Combining these estimates, it can be checked on a case-by-case basis that $N$ satisfies the symmetry assumption of the lemma. Since $\cod(N) \leq \frac{n-1}{2}$, $\pi_1(M)$ acts on $N$ and so, by induction, it follows that $\pi_1(M) \cong \Z_{2^e} \times \Gamma$ for some odd-order $\Gamma \in \mathcal C_d$ such that $d \leq \min\of{3, \frac{\dim(N) + 1}{2(r-1)}}$. Since $n \geq 17$ and $r \geq \sqrt{n-1}$, this implies $d \leq \min\of{3, \frac{n+1}{2r}}$. Moreover, $d = 3$ cannot occur if $n = 33$, so the proof is complete.
\end{proof}

With this lemma out of the way, we have that Theorem \ref{thm:CCSlogPLUS} holds for dimensions up to $97$. It therefore suffices to prove it for dimensions $n \geq 101$.

\begin{proof}[Proof of Theorem \ref{thm:CCSlogPLUS} for $n \geq 101$]
We again lift to the universal cover $\tilde M$. We also recall the discussion above that Conclusion (2) and Conclusion (2') are equivalent. With this in mind, the theorem follows immediately when $\tilde M$ is a rational homology sphere by the Davis-Weinberger factorization (Proposition \ref{pro:DavisWeinberger}) and Proposition \ref{pro:Room}. We assume therefore that $\tilde M$ is not a rational homology sphere.

By Berger's lemma, some $T^{r-1} \subseteq T^r$ has a fixed point $x \in \tilde M$. Consider the map $\phi:\Z_2^{r-1} \to \Z_2^{(n-1)/2} \subseteq \SO(T_x \tilde M)$ induced by the isotropy representation at $x$. For $\iota \in \Z_2^{r-1}$, the Hamming weight of $\phi(\iota) \in \Z_2^{(n-1)/2}$ equals half the codimension of the fixed-point component $N \subseteq \tilde M^\iota$ containing $x$. Hence an even Hamming weight corresponds to a fixed-point component of codimension divisible by four. Since the map $\phi$ is linear, there exists $\Z_2^{r-2} \subseteq \Z_2^{r-1}$ such that every $\iota \in \Z_2^{r-2}$ satisfies $\cod(N) \equiv 0 \bmod{4}$ where $N \subseteq \tilde M^\iota$ is the component containing $x$. Set $s = r - 2$. 

Since $\tilde M$ is not a rational sphere, Proposition \ref{pro:AKlog} implies the existence of an $\iota \in \Z_2^s$ such that some component $N$ of its fixed-point set satisfies $\cod(N) \equiv 0 \bmod{4}$, $\cod(N) \geq \frac{n+3}{4}$, $\cod(N) \leq \frac{n-1}{2}$, and $\dim\ker(T^s|_N) \leq 1$. By Lemma \ref{lem:dk2}, we may assume further that $\dim\ker(T^r|_N) \leq 1$ since otherwise $\tilde M$ would be a rational sphere. The actions of $\pi_1(M)$ and $T^s$ commute, so $\pi_1(M)$ acts on the fixed-point set $\tilde M^\iota$. Moreover, Frankel's theorem and the property that $\cod(N) \leq \frac{n-1}{2}$ implies that $N \subseteq \tilde M^\iota$ is the only component of dimension greater than $\frac{n}{2}$, so $\pi_1(M)$ acts on $N$. Moreover, the other properties enjoyed by $N$ imply that $N$ satisfies the induction hypothesis. In particular, the estimates $\cod(N) \geq \frac{n+3}{4}$ and $\dim\ker(T^r|_N) \leq 1$ imply that $N$ admits $T^s$ symmetry with $s \geq r - 1 > \log_{4/3}\pfrac{\dim(N) + 3}{4}$. By induction, we conclude that $\pi_1(M) \cong \Z_{2^e} \times \Gamma$ for some odd-order group $\Gamma$.

We proceed to the proof of Conclusion (1). Suppose that $\Gamma \supseteq \Z_p \times \Z_p$ for some prime $p$. By induction again, Conclusion (1) holds for $N$. This implies both of the following inequalities:
	\begin{eqnarray}
	r - 1 &\leq& \frac{\dim(N) + 1}{2p},\\\label{eqn:IH2p}
	r - 1 &\leq& \frac{\dim(N) + 1}{4p}+\frac{p}{2}.\label{eqn:IH4p}
	\end{eqnarray}

Note that the bound on $r$ implies $r \geq 4$. Hence $\frac{3}{4}r \leq r - 1$. In addition, the right-hand side of Inequality \ref{eqn:IH2p} is at most $\frac{3}{4}\pfrac{\dim(N) + 1}{2p}$ since $\cod(N) > \frac{n+1}{4}$. Putting this together, we conclude $r \leq \frac{n+1}{2p}$, which is the first part of Conclusion (1).

It now suffices to prove $r \leq \frac{n+1}{4p} + \frac{p}{2}$. We break the proof into cases.
	\begin{itemize}
	\item If $n + 1 \leq 4p(p-2)$, then the estimate 
		\[\dim(N) + 1 < \frac{3}{4}(n+1) \leq \frac{n+1}{2} + {p(p-2)}\]
	together with Inequality \ref{eqn:IH2p} imply
		\[r \leq \frac{\dim(N) + 1}{2p} + 1 
			< \frac{n+1}{4p}  + \frac{p}{2}.\]
	\item If $n + 3 \geq 16p$, then $\cod(N) \geq \frac{n+3}{4} \geq 4p$ and hence Inequality \ref{eqn:IH4p} implies
		\[r \leq \frac{\dim(N) + 1}{4p} + \frac{p}{2} + 1 \leq \frac{n+1}{4p} + \frac{p}{2}.\]
		\item If $4p(p-2) < n+ 1$ and $n+3 < 16p$, then $p \leq 5$ and hence $n < 16p-3 \leq 77$, a contradiction. Hence one of the previous two cases occurs.
	\end{itemize}

It suffices to prove Conclusion (2). By the remarks following Theorem \ref{thm:CCSlogPLUS}, this is equivalent to proving Conclusion (2'). Suppose then that $\Gamma \in \mathcal C_d$ for some $d \geq 1$. The task is to show that $r \leq \frac{n+1}{2d}$. But this follows by induction using the argument for Conclusion (1) that showed $r \leq \frac{n+1}{2p}$, so the proof is complete.
\end{proof}

\bigskip\section{Proof of Theorem \ref{thm:CCS12}}\label{sec:CCS}\bigskip

Theorem \ref{thm:CCS12} has two parts. The first part involves the assumptions that $r > \frac{n+1}{8} + 1$ and $n \geq 23$, and the result is a direct generalization of Theorems A and B in Sun--Wang \cite{SunWang09}. We discuss the proof of this first. For convenience, we restate our theorem here:

\begin{theorem}\label{thm:SunWangPLUS}
Let $M^n$ be a closed, positively curved manifold with $T^r$ symmetry. If $n \geq 23$ and $r \geq \frac{n+3}{8} + 1$, then $\pi_1(M)$ acts freely and smoothly on a standard sphere.
\end{theorem}

Besides the application of the realization theorem of Madsen, Thomas, and Wall (Theorem \ref{thm:MadsenThomasWall}), the only substantive improvement here upon the results of Sun and Wang is the conclusion that $\pi_1(M) \not\supseteq \Z_3 \times \Z_3$ under the conditions of the theorem. The place in Sun and Wang's argument where $\Z_3 \times\Z_3$ cannot be excluded is in Lemma 1.5 of their paper. Our strategy for proving Theorem \ref{thm:SunWangPLUS} is not to improve Lemma 1.5 in their paper. Instead, we look in their proof where Lemma 1.5 is applied, and we provide an alternative argument based on Proposition \ref{pro:MorePeriodicity}.

\begin{proof}
By Theorem \ref{thm:MadsenThomasWall}, it suffices to show that $\pi_1(M)\not\supseteq\Z_p \times \Z_p$ for every prime $p$ and that every involution $\pi_1(M)$ is central. The second of these claims follows by Theorem B in \cite{SunWang09}. It suffices to show that $\pi_1(M) \not\supseteq \Z_p\times\Z_p$ for any prime $p$.

We follow the proof in \cite[Section 1]{SunWang09} and adopt the authors' notation for the remainder of this proof. In particular, let $F \subseteq \tilde M$ denote the fixed-point component of an involution, chosen so that the codimension $\cod(F)$ is minimal. If $\cod(F)$ is two or four, the proof carries through without change. In particular, $\pi_1(M)$ is cyclic in the first case and isomorphic to a three-dimensional spherical space form group in the second. Next, if $\cod(F) \geq 8$ or if $\cod(F) = 6$ and $F$ is not fixed by a circle in $T^r$, the proof again carries through using either the induction hypothesis if $\dim(F) \geq 23$ or previous lower-dimensional results. 

This leaves the case where $\cod(F) = 6$ and $F$ is fixed by a circle in $T^r$. In this case, the authors apply \cite[Lemma 1.5]{SunWang09} to conclude that $\pi_1(M) \not\supseteq \Z_p \times \Z_p$ for any prime $p \neq 3$. To exclude the possibility that $\pi_1(M) \supseteq \Z_3 \times \Z_3$, we apply the following argument. By \cite[Lemma 1.1]{SunWang09}, there is a second fixed-point component $F'$ of an involution in $T^r$ with $\cod(F) < \frac{n}{2}$ that is fixed by at most a one-dimensional subtorus of $T^r$. The proof is again complete if $\cod(F') \neq 6$ by repeating the above arguments, so we assume $\cod(F') = 6$.

If $F \cap F'$ is not transverse, then the intersection is a codimension-two, totally geodesic submanifold of either $F'$ or of the fixed-point component of the product of the two involutions under consideration. It follows immediately that $\pi_1(M)$ is cyclic in this case (e.g., see \cite[Lemma 1.4]{SunWang09}). If, on the other hand, the intersection $F \cap F'$ is transverse, then we are in the setting of Proposition \ref{pro:MorePeriodicity}, which implies that $\tilde M$ is a mod $3$ homology sphere and hence that $\pi_1(M) \not\supseteq \Z_3 \times \Z_3$, as claimed.
\end{proof}

We proceed to prove the second part of Theorem \ref{thm:CCS12}. It is an immediate consequence of the following result (take $q = 3$).

\begin{theorem}\label{thm:CCS4q}
Let $M^n$ be a closed, positively curved Riemannian manifold with $T^r$ symmetry. If $n \equiv 1 \bmod{4}$ and $r \geq \frac{n-1}{4q} + q$ for some real number $q > 0$, then $\pi_1(M) \cong \Z_{2^e} \times \Gamma$ where $\Gamma$ has odd order. In particular, every involution is central. Moreover, the following hold:
	\begin{enumerate}
	\item If $\Gamma \supseteq \Z_p \times \Z_p$ for some prime $p$, then $2 < p < q$.
	\item If $\Gamma \not\supseteq \Z_p \times \Z_p$ for any prime $p$, then $\Gamma \in \mathcal C_d$ for some odd $d\geq 1$ such that $d< 2q$. As a consequence, $\pi_1(M)$ contains a normal, cyclic subgroup of index $d$ and acts freely and smoothly on the standard sphere of dimension $2d-1$.
	\end{enumerate}
\end{theorem}

This result provides an obstruction to the existence of subgroups $\Z_p \times \Z_p \subseteq \pi_1(M)$ for all primes $p$ larger than a prescribed number $q$, and the symmetry assumption is precisely what is required to do this, given the techniques of this paper.

The proof of this theorem is a straightforward application of Theorem \ref{thm:CCSlogPLUS} and Lemma \ref{lem:CCSsqrt}. 

\begin{proof}
First observe that $n \geq 5$ and $0 < q \leq 1$ implies that $r \geq \frac{n}{4} + 1$, hence $\pi_1(M)$ is cyclic by Wilking's theorem (Theorem \ref{thm:WFRW}). We assume from now on that $n \geq 5$ and $q \geq 1$. In particular, the symmetry assumption implies that $r > \frac{n+1}{4q}$.

If $n \leq 97$, then the result follows immediately from Lemma \ref{lem:CCSsqrt} since
	\[r \geq \frac{n-1}{4q} + q \geq \sqrt{n-1}.\]

Suppose now that $n \geq 101$. In this range, $r \geq \sqrt{n-1}$ implies that $r \geq \log_{4/3}\pfrac{n+3}{6}$, so Theorem \ref{thm:CCSlogPLUS} applies. The  conclusion of Theorem \ref{thm:CCSlogPLUS} is that $\pi_1(M)$ factors as $\Z_{2^e} \times \Gamma$ for some odd-order group $\Gamma$ such that
	\begin{enumerate}
	\item if $\Gamma \supseteq \Z_p\times\Z_p$ for some prime $p$, then $r \leq \frac{n+1}{2p}$ and $r \leq \frac{n+1}{4p} + \frac{p}{2}$, and
	\item if $\Gamma \not\supseteq \Z_p\times \Z_p$ for any prime $p$, then $\Gamma \in \mathcal C_d$ for some odd $d \geq 1$ such that $d\leq \frac{n+1}{2r}$. In particular, $\pi_1(M)$ contains a normal, cyclic subgroup of index $d$ and acts freely and smoothly on $\s^{2d-1}$.
	\end{enumerate}
Suppose for a moment that $\Gamma \supseteq \Z_p \times \Z_p$ for some prime $p$. By (1), we conclude that $r \leq \frac{n+1}{2p}$ and $r \leq \frac{n+1}{4p}+\frac{p}{2}$. For $p \geq 2q$, this implies that 
	\[r \leq \frac{n+1}{2p}                       \leq \frac{n+1}{4q},\]
which contradicts the bound on $r$ and the assumption that $q \geq 1$. Similarly, $q \leq p \leq 2q - 1$ implies that
	\[r \leq \frac{n+1}{4p} + \frac{p}{2}   \leq   \frac{n+1}{4q} + q - \frac{1}{2},\]
which leads to a similar contradiction. Hence $p$ must be less than $q$, as claimed in Conclusion (1) of the theorem. The proof of Conclusion (2) is immediate from the combination of the estimates $d \leq \frac{n+1}{2r}$ and $r > \frac{n+1}{4q}$ above, so the proof is complete.
\end{proof}

\bigskip\section{Proof of Theorem \ref{thm:SCC}}\label{sec:SCC}\bigskip

We reproduce the first part of Theorem \ref{thm:SCC} here for easy reference.

\begin{theorem}\label{thm:SCCintegral}
Let $M^n$ be a closed, odd-dimensional Riemannian manifold with positive sectional curvature and $T^r$ symmetry. Assume that the universal cover of $M$ is a homotopy sphere. If $r \geq \frac{n+1}{2q} + 1$ where $q$ is the smallest prime dividing $\frac{n+1}{2}$, then $\pi_1(M)$ is cyclic.
\end{theorem}

\begin{proof}
First, if $n$ is a dimension such that $q = 2$, then the theorem holds by Wilking's result (Theorem \ref{thm:WFRW}). We may assume therefore that $q \geq 3$. In other words, $2$ does not divide $\frac{n+1}{2}$, which means that $n \equiv 1 \bmod{4}$.  By the Davis--Weinberger factorization, $\pi_1(M) \cong \Z_{2^e} \times \Gamma$ for some odd-order group $\Gamma$.

Consider the free $\Gamma$--action on the universal cover of $M$. By Smith's theorem (Theorem \ref{thm:Smith44}), every abelian subgroup of $\Gamma$ is cyclic. By Burnside's classification (Theorem \ref{thm:Burnside}), $\Gamma \in \mathcal C_d$ for some odd $d \geq 1$. It suffices to prove that $d = 1$.

On one hand, Lemma \ref{lem:GroupCohomology} implies that $\Gamma$ has $\gcd(2d,n+1)$--periodic cohomology. By the same lemma, it suffices to show that this period is two. To show this, first note that Proposition \ref{pro:Room} implies $d \leq \frac{n+1}{2r} < q$. In particular, any prime divisor of $d$ will be less than $q$. By the definition of $q$, any such prime will not divide $\frac{n+1}{2}$. Hence $d$ is relatively prime to $\frac{n+1}{2}$, which implies that $\gcd(2d,n+1) = 2$, as required.
\end{proof}

We proceed to the proof of the other special case of the conjecture stated in the introduction. It is the second part of Theorem \ref{thm:SCC}, which we reproduce here.

\begin{theorem}\label{thm:SCCempty}
Let $q$ be a prime. Let $n$ be an odd, positive integer such that $q$ is the smallest prime dividing $\frac{n+1}{2}$. Let $M^n$ be a closed, positively curved manifold with $T^r$ symmetry such that $r \geq \frac{n+1}{2q} + 1$. If $n \geq 16q^2$ and if the fixed-point set of $T^r$ is empty, then $\pi_1(M)$ is cyclic.
\end{theorem}

For each $q$, this confirms the conjecture in all but finitely many dimensions under the additional assumption that the torus action has no fixed points. To illustrate how this assumption is used, we prove the following.

\begin{lemma}\label{lem:SCCempty}
Let $M^n$ be a closed, positively curved Riemannian manifold with $n \equiv 1 \bmod{4}$. If $M$ admits an isometric torus action of rank $r \geq 2\sqrt n$ such that there are no fixed points, then any $\Gamma \subseteq \pi_1(M)$ of odd order satisfies the following:
	\begin{enumerate}
	\item If $\Gamma \supseteq \Z_p \times \Z_p$ for some prime $p$, then $2p$ divides $n+1$.
	\item If $\Gamma \in \mathcal C_d$ for some $d \geq 1$, then $2d$ divides $n+1$.
	\end{enumerate}
\end{lemma}

Theorem \ref{thm:SCCempty} follows easily by combining Lemma \ref{lem:SCCempty} with Theorem \ref{thm:CCSlogPLUS}, so we prove it now.

\begin{proof}[Proof Theorem \ref{thm:SCCempty}]
The theorem holds at $q = 2$ by Wilking's result (Theorem \ref{thm:WFRW}), so we may assume $q \geq 3$. In particular, the divisibility assumption implies that $n \equiv 1 \bmod{4}$.

Since $n \geq 16 q^2$, the symmetry assumption implies
	\[r > \max\of{2\sqrt{n}, \log_{4/3}\pfrac{n+3}{6}}.\]
In particular, Theorem \ref{thm:CCSlogPLUS} and Lemma \ref{lem:SCCempty} apply. Together these results imply that $\pi_1(M) \cong \Z_{2^e} \times \Gamma$ for some $e \geq 0$ and some odd-order group $\Gamma$ such that the following hold:
	\begin{enumerate}
	\item If $\Gamma \supseteq \Z_p \times \Z_p$ for some prime $p$, then $p \leq \frac{n+1}{2r}$ and $2p $ divides $ n+1$.
	\item If $\Gamma \in \mathcal{C}_d$ for some $d\geq 1$, then $d \leq \frac{n+1}{2r}$ and $2d$ divides $ n+1$.
	\end{enumerate}
Since $r > \frac{n+1}{2q}$ and $q$ is the minimum prime dividing $\frac{n+1}{2}$, the first of these implies that every abelian subgroup of $\Gamma$ is cyclic. By Burnside's classification, $\Gamma \in \mathcal C_d$ for some $d \geq 1$. But now the second statement similarly implies that $d = 1$. By definition of $\mathcal C_d$, $\Gamma$ is cyclic, which implies that $\pi_1(M)$ is cyclic.
\end{proof}

We proceed to the proof of Lemma \ref{lem:SCCempty}, which occupies the rest of this section. First, if $M$ is a rational homology sphere, then the lemma follows immediately from Proposition \ref{pro:Room}. Assume therefore that $M$ is not a rational sphere.

Next, note that $2\sqrt{n} \geq \frac{n}{6} + 1$ for all $n \leq 81$, so $\Gamma$ is cyclic by the result of Frank--Rong--Wang (Theorem \ref{thm:WFRW}). Conclusion (1) is vacuously true, and Conclusion (2) is trivially true since $d = 1$ and $n \equiv 1 \bmod{2}$. 

Assume therefore that $n > 81$. In this range, Proposition \ref{pro:AKsqrt} implies that the fixed-point set of every involution in $T^r$ has codimension greater than $\sqrt{n}$.

As in the proof of Theorem \ref{thm:CCSlogPLUS}, we may choose a subtorus $T^{r-1} \subseteq T^r$ fixing a point $x \in M$ and a subgroup $\Z_2^{r-2} \subseteq T^{r-1}$ all of whose elements $\tau$ have $\cod\of{M^\tau_x} \equiv 0 \bmod{4}$. Here and throughout this section, the notation $M^\tau_x$ denotes the component of the fixed point set of $\tau$ that contains $x$. 

Choose $\iota_1 \in \Z_2^{r-2}$ such that $N_1 = M^{\iota_1}_x$ has minimal codimension. By minimality, at most a two-dimensional subtorus of $T^r$ fixes $N_1$. Moreover, since $r-2 > 2 \log_2(n)$ in this range of dimensions, $N_1$ has codimension less than $\frac{n}{2}$. Indeed, this follows from an estimate using error correcting codes (e.g., take $c = 2$ in \cite[Lemma 1.8]{Kennard14}).

Choose any $\Z_2^{r-3} \subseteq \Z_2^{r-2}$ that does not contain $\iota_1$, and choose further a subgroup $\Z_2^{r-4} \subseteq \Z_2^{r-3}$ all of whose involutions $\tau$ satisfy $\cod\of{(N_1)^{\tau}_x} \equiv 0 \bmod{4}$. Choose $\iota_2 \in \Z_2^{r-4}$ such that $\cod\of{(N_1)^{\iota_2}_x \subseteq N_1}$ is minimal. By minimality, at most a two-dimensional subtorus of the one acting on $N_1$ fixes $(N_1)^{\iota_2}_x$, and hence an at most four-dimensional subtorus of $T^r$ fixes $(N_1)^{\iota_2}_x$. Moreover, since $r - 2 > 2\log_2(\dim N_1)$, we again have that the codimension of $(N_1)^{\iota_2}_x \subseteq N_1$ is less than half of $\dim(N_1)$. Consider the isotropy at $x$, we see that we may replace $\iota_2$ by $\iota_1\iota_2$, if necessary, so that $N_2 = M^{\iota_2}_x$ has codimension less than $\frac{n}{2}$. By the connectedness lemma (see second remark following Theorem \ref{thm:Connectedness}), the intersection $N_1 \cap N_2$ is connected and equals $(N_1)^{\iota_2}_x$.

Finally, set $N_{12} = M^{\iota_1\iota_2}_x$. We make the following:

\begin{claim}
If $\Gamma$ contains a copy of $\Z_p\times\Z_p$ for some prime $p$, then $2p$ divides $\dim(N) + 1$ for every $N \in \{N_1, N_2, N_{12}, N_1\cap N_2\}$.
\end{claim}

\begin{proof}[Proof of claim]
First, we have already established that $\cod(N_i) < n/2$ for $i\in\{1,2\}$ and that $N_1 \cap N_2$ is connected. Consequently, Frankel's theorem implies that $N_i$ is the only component of $M^{\iota_i}$ with dimension greater than $\frac{n}{2}$. In particular, $\Gamma$ acts on each $N_i$ and hence on $N_1 \cap N_2$. Since $N_1 \cap N_2 = M^{\langle\iota_1,\iota_2\rangle}_x$ is contained in $N_{12} = M^{\iota_1\iota_2}_x$,  $\Gamma$ acts on $N_{12}$ as well.

Second, we have also established that $\cod(N_i) \geq \sqrt{n}$ for each $i \in \{1,2\}$. Moreover, since $N_1$ and $N_2$ have codimension less than $n/2$, Lemma \ref{lem:dk2} implies that each $N_i$ admits an isometric $T^{r-1}$ action commuting with the action of $\Gamma$. Since
	\[r - 1 \geq 2\sqrt{n} - 1 \geq 2\sqrt{n - \sqrt{n}} \geq 2\sqrt{\dim(N_i)},\]
and since $\cod(N_i) \equiv 0 \bmod{4}$, we have by induction that $2p$ divides $\dim(N_i) + 1$ for $i \in \{1,2\}$.

Third, if $\cod(N_{12}) < \frac{n}{2}$, then the argument from the previous paragraph applied to $N_{12}$ implies that $2p$ divides $ \dim(N_{12}) + 1$. Otherwise, $\cod(N_{12}) \geq \frac{n}{2}$. Since $N_1 \cap N_2$ admits $T^{r-4}$ symmetry commuting with the $\Gamma$--action, so does $N_{12}$. Since
	\[r - 4 \geq 2\sqrt{n} - 4 \geq 2\sqrt{n/2} \geq 2\sqrt{\dim(N_{12})},\]
for $n > 81$, we have by induction that $2p $ divides $ \dim(N_{12}) + 1$.

Finally, we claim that $2p $ divides $ \dim(N_1 \cap N_2) + 1$. If some two-dimensional torus acting on $N_1$ fixes $N_1 \cap N_2 = (N_1)^{\iota_2}_x$, then $N_1$ is a rational homology sphere by Lemma \ref{lem:dk2}. It follows then by Proposition \ref{pro:CircleObstruction} that
	\[\dim(N_1 \cap N_2) + 1 \equiv \dim(N_1) + 1 \equiv 0 \bmod{2p},\]
as required. If there is no such two-dimensional torus, then $N_1 \cap N_2$ admits $T^{r-2}$ symmetry. By an estimate similar to those above, we have $r - 2 \geq 2 \sqrt{\dim(N_1 \cap N_2)}$. Moreover, since we chose $\iota_1$ and $\iota_2$ so that
	\[\dim(N_1 \cap N_2) \equiv \dim(N_1) \equiv 1 \bmod{4}\]
and so that $\Gamma$ acts on $N_1 \cap N_2$, we have by induction that $2p$ divides $\dim(N_1 \cap N_2) + 1$.
\end{proof}

Given the claim, we conclude that $2p$ divides $n + 1$ from the following observation:
	\[n  + 2\dim(N_1\cap N_2) = \dim(N_1) + \dim(N_2) + \dim(N_{12}).\]
This completes the proof of Conclusion (1) of Lemma \ref{lem:SCCempty}. The proof of Conclusion (2) is similar, so the proof of Lemma \ref{lem:SCCempty} and hence of Theorem \ref{thm:SCCempty} is complete.


\end{document}